\newcommand{\tensor}{\otimes}
\newcommand{\op}{\mathcal}
\newcommand{\cdc}{,\dots,}
\newcommand{\fm}{\mathsf{fM}}
\newcommand{\kdb}{k_\Delta\langle\beta_\ast\rangle}
\numberwithin{equation}{section}
\newtheorem{theorem}{Theorem}[section]
\theoremstyle{plain}
\newtheorem*{nonumbertheorem}{Theorem}
\newtheorem{corollary}[theorem]{Corollary}
\newtheorem{lemma}[theorem]{Lemma}
\theoremstyle{definition}
\newtheorem{definition}[theorem]{Definition}
\newtheorem{remark}[theorem]{Remark}
\begin{document}

\title{Hyper-commutative algebras and cyclic cohomology}
\author{Benjamin C. Ward}

%\date{December 20, 2010 }
%\subjclass{Primary , ?; Secondary ?, ?} 
%\keywords{Keyword one, keyword two etc.}

%\begin{abstract} This paper investigates a chain model for the Deligne-Mumford operad formed by homotopically trivializing the circle in a chain model for the framed little disks.  The latter acts on the deformation complex of a multiplicative cyclic operad and under degeneration of cyclic cohomology the homotopy BV (resp. homotopy gravity) algebra on Hochschild (resp. cyclic) cochains lifts to an algebra over the former.  In particular we interpret the gravity brackets as obstructions to degeneration of cyclic cohomology.\end{abstract}

\begin{abstract}

	This paper introduces a chain model for the Deligne-Mumford operad formed by homotopically trivializing the circle in a chain model for the framed little disks.  We then show that under degeneration of the Hochschild to cyclic cohomology spectral sequence, a known action of the framed little disks on Hochschild cochains lifts to an action of this new chain model.  We thus establish homotopy hyper-commutative algebra structures on both Hochschild and cyclic cochain complexes, and we interpret the gravity brackets on cyclic cohomology as obstructions to degeneration of this spectral sequence.  Our results are given in the language of deformation complexes of cyclic operads.

\end{abstract}

\maketitle
\section*{Introduction}

A differential graded Batalin-Vilkovisky (BV) algebra enhanced with a homotopy trivialization of the $\Delta$-operator is equivalent to a hyper-commutative (HyCom) algebra \cite{DCV,KMS,DC}.  This relationship may be described in the language of operads, where BV and HyCom algebras are represented respectively by the homology operads of genus 0 moduli spaces of surfaces with boundary \cite{G94}, and by the Deligne-Mumford compactification of the moduli space of surfaces with punctures \cite{GetMod}.  In practice, BV algebras often arise as the homology or cohomology of a geometric, topological or algebraic object, and the chain level structure can only be expected to be BV up-to-homotopy.  For example, this is the case when studying Hochschild cochain operations via the cyclic Deligne conjecture \cite{KCyclic}.  More generally, this is the case when considering the deformation complex of a cyclic operad $\op{O}$ with $A_\infty$ multiplication $\mu$.  We denote such a deformation complex $CH^\ast(\op{O}, \mu)$.

On the other hand, the results of \cite{Ward2} show that the complex of cyclic invariants associated to such data carries a compatible structure of an algebra over a model of the {\it open} moduli space of punctured Riemann spheres.  This complex of invariants is a generalization of Connes' $C^\ast_\lambda$-complex and will be denoted $C^\ast_\lambda(\op{O},\mu)$.  Its cohomology $HC^\ast(\op{O},\mu)$ generalizes the notion of the cyclic cohomology of a cyclic $k$-module.  It is natural to ask for conditions under which this action of the open moduli space lifts to an action of an operad of chains on the Deligne-Mumford compactification.

In a BV algebra, the $\Delta$-operator corresponds to an action of the circle at a boundary component.  In the homotopy theory of $S^1$-spaces, trivialization of the circle action corresponds to degeneration of the Hochschild to cyclic (co)homology spectral sequence arising from the associated cyclic object.  Thus it is reasonable to expect that an analog of this degeneration will permit such a lifting.  We prove the following result (see Theorem $\ref{algthm}$):

\begin{nonumbertheorem}  Let $\mu\colon \op{A}_\infty\to\op{O}$ be a map of cyclic operads.  Let $CH^\ast(\op{O},\mu)$ and $C^\ast_\lambda(\op{O},\mu)$ be the associated deformation and cyclic deformation complexes of $\mu$.  If the morphism $\mu$ is cyclically degenerate (Definition $\ref{degdef2}$) then the homotopy BV algebra structure on $CH^\ast(\op{O},\mu)$ lifts to a compatible homotopy hyper-commutative algebra.  Moreover, $C^\ast_\lambda(\op{O},\mu)$ carries the structure of a $\op{H}y\op{C}om_\infty$-algebra for which the inclusion $C^\ast_\lambda(\op{O},\mu)\to CH^\ast(\op{O},\mu)$ extends to an $\infty$-morphism of $\op{H}y\op{C}om_\infty$-algebras.
\end{nonumbertheorem}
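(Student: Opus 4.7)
The plan is to leverage the chain model $\dm$ for the Deligne-Mumford operad introduced earlier in the paper, which by construction is obtained from a chain model $\fm$ of the framed little disks by homotopically trivializing the circle, equivalently the BV $\Delta$-operator. Morally this construction is universal: an $\fm$-algebra equipped with a coherent nullhomotopy of $\Delta$ should assemble canonically into a $\dm$-algebra, which by definition is a $\op{H}y\op{C}om_\infty$-algebra. The proof should therefore run on two parallel tracks, feeding the existing $\fm$-type structures plus the degeneracy data into this construction.

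First I would collect the inputs. On $CH^\ast(\op{O},\mu)$ the cyclic Deligne conjecture in the form appropriate to cyclic operads with $A_\infty$-multiplication supplies the $\fm$-action underlying the homotopy BV structure. On $C^\ast_\lambda(\op{O},\mu)$ the results of \cite{Ward2} supply the action of a chain model of the open moduli space of punctured Riemann spheres, which interacts with the ambient $\fm$-action via the subcomplex inclusion. Cyclic degeneracy of $\mu$ should then be unpacked to produce a coherent nullhomotopy of $\Delta$ on each complex, together with a choice of these nullhomotopies that is compatible with the inclusion $C^\ast_\lambda(\op{O},\mu)\hookrightarrow CH^\ast(\op{O},\mu)$.

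Feeding these inputs into the definition of $\dm$ yields, on each of the two complexes, a $\dm$-action and hence a $\op{H}y\op{C}om_\infty$-structure. Compatibility of the resulting structure on $CH^\ast(\op{O},\mu)$ with the homotopy BV structure is built in by the construction of $\dm$ out of $\fm$, which takes care of the first assertion of the theorem.

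The main obstacle is the $\infty$-morphism statement. The inclusion is strict for the underlying $\fm$-actions, but the nullhomotopies of $\Delta$ on the two complexes a priori need agree only up to higher homotopy, so the $\infty$-morphism must include a tower of components correcting this discrepancy. I would organize this inductively inside the deformation complex of cyclic operads formalism used throughout the paper: each obstruction class should live in a group whose vanishing follows from cyclic degeneracy, most efficiently via a filtration argument that compares the Hochschild-to-cyclic spectral sequences on the two complexes and identifies successive obstructions with classes on a differential that is killed at $E_1$ by the degeneration hypothesis. Verifying this contractibility and tracking the compatibility across all arities of $\dm$ is where I expect the bulk of the technical work to lie.
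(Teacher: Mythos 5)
Your treatment of the first assertion is essentially the paper's: cyclic degeneracy supplies the operators $\beta_i$ trivializing $\Delta$ on $(\op{O}^\ast,d_\mu)$, and the adjunction underlying $\Phi_{KMS}$ (Corollary~$\ref{KMScor}$) converts the $\fm$-action plus this trivialization into an $\fm_{hS^1}$-action lifting the homotopy BV structure, compatibly with Corollary~$\ref{HCinfcor}$.

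The second assertion is where your plan has a genuine gap. You propose to run the same universal construction on $C^\ast_\lambda(\op{O},\mu)$, i.e.\ to choose a trivialization of $\Delta$ on the cyclic invariants compatible with the one on $CH^\ast(\op{O},\mu)$ and thereby obtain the $\op{H}y\op{C}om_\infty$-structure there directly. But $C^\ast_\lambda(\op{O},\mu)$ is \emph{not} an $\fm$-algebra: only the suboperad $\mathsf{M}_\circlearrowright\subset\fm$ of operations preserving $t$-invariance acts on it, and its homology is the gravity operad, not BV. (Relatedly, $\Delta=Ns_0(1-t)$ vanishes identically on invariants, so a ``trivialization of $\Delta$ on $C^\ast_\lambda$'' carries no information and certainly does not manufacture the missing $\fm$-operations.) Hence there is no input to which the trivialization machine can be applied on $C^\ast_\lambda$, and the obstruction-theoretic tower you sketch for the $\infty$-morphism has nothing to compare; you also never identify the obstruction groups or show they vanish. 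The paper instead routes through the auxiliary complex $(\op{O}^\ast[[z]],d_\mu+z\Delta)$: the inclusion $C^\ast_\lambda(\op{O},\mu)\hookrightarrow(\op{O}^\ast[[z]],d_\mu+z\Delta)$ is a quasi-isomorphism (Lemma~$\ref{cyclem2}$); degeneration gives a $z$-linear isomorphism $(\op{O}^\ast[[z]],d_\mu+z\Delta)\cong(\op{O}^\ast[[z]],d_\mu)$ (Lemma~$\ref{trivlem}$); the target carries the $z$-linear extension of the $\fm_{hS^1}$-structure from the first part, and the projection $\pi$ to $(\op{O}^\ast,d_\mu)$ is a strict morphism of such structures with $\pi\circ in=id$. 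Homotopy transfer along the resulting weak equivalence $C^\ast_\lambda(\op{O},\mu)\stackrel{\sim}\to(\op{O}^\ast[[z]],d_\mu)$ produces the $\op{H}y\op{C}om_\infty$-structure on $C^\ast_\lambda(\op{O},\mu)$ together with an $\infty$-quasi-isomorphism, and composing with $\pi$ yields the desired $\infty$-morphism whose linear part is the inclusion. This detour through $\op{O}^\ast[[z]]$ and homotopy transfer is the key idea your proposal is missing.
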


Examples of the complexes $CH^\ast(\op{O},\mu)$ and $C^\ast_\lambda(\op{O},\mu)$ include Hochschild and cyclic cochain complexes of Frobenius or cyclic $A_\infty$-algebras/categories, singular and equivariant cochains of $S^1$-spaces, as well as complexes computing string topology; homology and $S^1$-equivariant homology of the loop space of a closed oriented manifold, see \cite{Ward2} for these and other examples.

Formulating precise conditions under which the Hochschild to cyclic spectral sequence degenerates is a question of active study \cite{KS2}.  This question is of particular interest when studying homological mirror symmetry and categorical models of quantum cohomology \cite{BK,Manin}, which provided an expectation that the known BV/gravity structure should lift under degeneration.  Our result further says that, under degeneration and if the chain level structure can be seen as cyclic, the lifting can be performed at the chain level.  Pursuing this chain level structure in such geometric examples, with an eye toward a chain level lift of Gromov-Witten invariants, should be an interesting avenue for future study.

In proving this result we start with a chain model for the framed little disks given in \cite{Ward1} which we denote $\fm$.  Alternatively we could use cellular chains on Cacti \cite{Vor}, \cite{KCacti}.  These chain models have $\Delta^2=0$ and thus are susceptible to the language of mixed complexes.  Combining the constructions of \cite{KMS} and formality of the Deligne-Mumford (cyclic) operad \cite{GNPR} it is straight forward to prove:

\begin{nonumbertheorem}  Homotopically trivializing the $\Delta$-operator in $\fm$ gives a chain model for the hyper-commutative operad.
\end{nonumbertheorem}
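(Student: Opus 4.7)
The plan is to combine the operadic trivialization construction of \cite{KMS} with formality of the Deligne-Mumford cyclic operad from \cite{GNPR}. First, since $\fm$ carries a $\Delta$-operator squaring to zero, I would apply the construction of \cite{KMS} directly at the operad level: freely adjoin a sequence of odd-degree generators $\beta_1,\beta_2,\ldots$ with $d\beta_1=\Delta$ together with the higher coherences that homotopically trivialize $\Delta$, producing a new dg (cyclic) operad $\fm\langle\beta_\ast\rangle$. This is the ``homotopy trivialization'' referred to in the statement, and by construction it admits a natural quasi-isomorphism of underlying modules compatible with operadic composition.

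Second, I would compute its homology. Filtering $\fm\langle\beta_\ast\rangle$ by the $\beta$-degree yields a spectral sequence whose $E^1$ page is the BV operad $H_\ast(\fm)$ with polynomial generators $\beta_k$ adjoined, carrying the differential that trivializes $\Delta$. The main theorem of \cite{KMS} (cf.\ also \cite{DCV,DC}) identifies this dg operad at the level of homology with the hyper-commutative operad $\op{H}y\op{C}om$, so the spectral sequence collapses and $H_\ast(\fm\langle\beta_\ast\rangle)\cong\op{H}y\op{C}om$ as (cyclic) operads.

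Finally, I would invoke formality of the Deligne-Mumford cyclic operad \cite{GNPR}, which provides a zig-zag of quasi-isomorphisms between $C_\ast(\overline{\op{M}})$ and its homology $\op{H}y\op{C}om$. Composing with the computation of the previous paragraph gives a zig-zag between $\fm\langle\beta_\ast\rangle$ and $C_\ast(\overline{\op{M}})$, which is precisely the chain model assertion.

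The main obstacle I anticipate is the second step: one must verify that the trivialization of \cite{KMS}, whose original formulation is at the level of algebras, transports cleanly to the operad $\fm$, and that the resulting filtration argument does not require additional input beyond $\Delta^2=0$. Since $\fm$ was built in \cite{Ward1} precisely to carry a well-behaved mixed-complex structure, and since the formality input of \cite{GNPR} is already in operadic form, the combination should go through without further hypotheses.
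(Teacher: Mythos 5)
Your first and third steps match the paper: $\fm_{hS^1}:=\Phi_{KMS}(\fm)$ is exactly the KMS construction applied to $\fm$, and the comparison with $S_\ast(\overline{\op{M}}_{\ast+1})$ does come from the formality theorem of \cite{GNPR}. The gap is in your second step, and it is precisely the step the paper spends its first section preparing for. Even granting your spectral sequence computation (which as stated needs the filtration by the total weight $\sum_k i_k$ of the adjoined generators rather than a vague ``$\beta$-degree'', a convergence check, collapse at $E^2$ via even-degree concentration of $\op{H}y\op{C}om$, and an argument that there are no extension problems for the operad structure), knowing $H_\ast(\fm_{hS^1})\cong \op{H}y\op{C}om$ is not the same as producing a zig-zag of quasi-isomorphisms of \emph{dg operads} between $\fm_{hS^1}$ and $\op{H}y\op{C}om$; an isomorphism of homology operads does not ``compose'' with the formality zig-zag of \cite{GNPR}. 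What \cite{KMS} actually supplies is a weak equivalence $\op{H}y\op{C}om\stackrel{\sim}\to\Phi_{KMS}(\op{BV})$, and the substance of the theorem is transporting this along the comparison between $\op{BV}$ and $\fm$.

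That transport is exactly what your ``main obstacle'' paragraph waves away. The comparison zig-zag is $\fm\stackrel{\sim}\leftarrow\op{BV}_\infty\stackrel{\sim}\rightarrow\op{BV}$, and $\op{BV}_\infty$ does \emph{not} carry a square-zero $\Delta$: it is an operad under the resolution $\Omega H_\ast(BS^1)$ of $H^\ast(S^1)$, not under $H^\ast(S^1)$ itself, so $\Phi_{KMS}$ cannot be applied to the middle term and naive functoriality of ``adjoin the $\beta_i$ and kill $\Delta$'' along the zig-zag is unavailable; moreover the underived quotient by the ideal generated by $\Delta$ is not homotopy invariant. The paper resolves this by exhibiting $\Phi_{KMS}$ as a model for the derived functor $\tilde\Phi$ of a left Quillen quotient functor on undercategories, and then proving a base-change statement (Lemma \ref{welem2}, resting on left properness and the Quillen equivalence of Lemma \ref{welem}) identifying $\tilde\Phi_{k\to H^\ast(S^1)}(\op{Q})$ with $\tilde\Phi_{k\to\Omega H_\ast(BS^1)}(\Gamma_f\op{Q})$; since $\fm$ and $\op{BV}$ \emph{are} weakly equivalent under $\Omega H_\ast(BS^1)$, this yields $\tilde\Phi(\fm)\sim\tilde\Phi(\op{BV})\sim\op{H}y\op{C}om\sim S_\ast(\overline{\op{M}}_{\ast+1})$. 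Without this homotopy-invariance input (or an independent formality argument for $\fm_{hS^1}$), your argument establishes at most the homology of $\fm_{hS^1}$, not the chain-model statement. A minor slip: the $\beta_i$ have degree $-2i$, so they are even-degree generators, not odd.
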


This chain model is denoted $\fm_{hS^1}$ and it is, to an extent, combinatorially tractable.  For example, the fundamental class of $\overline{\op{M}}_4$ is represented by a union of fourteen 2-cells.  The combinatorics of $\fm_{hS^1}$ are based on planar trees with three types of vertices, see Section $\ref{combsec}$.  The following table summarizes the chain models for moduli spaces which act on such deformation complexes.  (Note that the assumption that $\op{O}$ is cyclic is not needed in the first row).

\begin{center}
\begin{tabular}{c|c|c|c|c}
{\bf Moduli }				      &			{\bf Algebras'} 	& {\bf Chain }					   &  {\bf Acts on /}                    &   {\bf Combin-} \\
{\bf Space}               &     {\bf  name  }    &     {\bf  Model }       &  {\bf theorem}                      &    {\bf atorics}    \\ \hline
points in           &   \multirow{2}{*}{Gersten-}        &   \multirow{2}{*}{$\mathsf{M}$  of \cite{KS}}             &  $CH^\ast(\op{O}, \mu)$.      & black and white   \\
 the plane:         &   \multirow{2}{*}{haber}    &    \multirow{2}{*}{(minimal operad)}      & $A_\infty$ Deligne-            &(or b/w)  planar            \\
 $\op{D}_2$         &          &                             &  conjecture               & rooted trees \\ 
\hline %--------------------------------------------------------------------------------------------------------------------------
surfaces with                  & \multirow{3}{*}{BV} &  \multirow{2}{*}{$\mathsf{fM}$}  &  $CH^\ast(\op{O}, \mu)$.   &  b/w planar       \\
boundary:                             &                     &    \multirow{2}{*}{of \cite{Ward1}}         & cyclic $A_\infty$-         &  rooted trees  \\  
$f\op{D}_2\sim\widehat{\op{M}}_\ast$  &                     &          & Deligne conj.              &  with spines   \\ 
\hline %--------------------------------------------------------------------------------------------------------------------------
surfaces with        & \multirow{3}{*}{gravity}      & \multirow{2}{*}{$\mathsf{M}_\circlearrowright$}  &  $C^\ast_\lambda(\op{O},\mu)$   &   b/w planar \\ 
punctures:           &                      &  \multirow{2}{*}{of \cite{Ward2}}             & $S^1$-equivariant &    non-rooted  \\  
$\op{M}_\ast$				 &                      &                               & Deligne Conj. &    trees\\
\hline %-----------------------------------------------------------------------------------------------------------------------
Deligne-Mumford           & HyCom            &    \multirow{2}{*}{$\fm_{hS^1}$}     &      $CH^\ast(\op{O}, \mu)$.              & b/w/gray    \\
compactification:         & or formal        & \multirow{2}{*}{Theorem $\ref{chainmodelthm}$}  &      under degen.           & planar rooted  \\  
$\overline{\op{M}}_\ast$	& Frob. mfd        &                     &      Theorem $\ref{algthm}$           &  trees w/ spines 
\end{tabular}
\end{center}
\bigskip

In this paper we assume familiarity with operads and cyclic operads, references for which include \cite{GeK1,MSS,LV}.  We also assume familiarity with moduli space operads in genus $0$ and their homology, references for which include \cite{G94,GetMod,KSV}.
\subsection*{Acknowledgment}  This paper was written at the Simons Center for Geometry and Physics, whose support I gratefully acknowledge.  Thanks are due to Gabriel C. Drummond-Cole, Yuan Gao, Ralph Kaufmann, Dennis Sullivan, and Bruno Vallette for helpful discussions along the way.  Many thanks are also due to the anonymous referee for many helpful improvements. 
\tableofcontents

\section{The model categorical framework.}  

Let $\op{O}ps$ be the category of reduced operads valued in the category of differential graded (dg) vector spaces over a field $k$ of characteristic $0$.  Reduced means we restrict attention to arities $n\geq 1$.  The category $\op{O}ps$ is a model category such that forgetting to symmetric sequences creates weak equivalences and fibrations \cite{BMo}.  The reduced assumption will be needed to ensure that this model category is left proper: a pushout of a weak equivalence along a cofibration is a weak equivalence (see \cite{BB} Theorem 0.1). 

We consider an associative algebra to be an operad concentrated in arity $1$.  Here we may consider either unital associative algebras or operads without units.  For an associative algebra $A$ we define $A$-$\op{O}$ps to be the undercategory $A\searrow \op{O}ps$.  Note we often abuse notation by writing eg $\op{Q}\in A$-$\op{O}ps$ and not $A\stackrel{\alpha}\to\op{Q} \in A$-$\op{O}ps$.  Note that the undercategory of a left proper model category is a left proper model category where forgetting to the original category creates all three classes of distinguished morphisms. 

%\subsection{Left Kan extensions.}
Let $f\colon A\to B$ be a morphism between dg associative algebras $A$ and $B$.  We define $\Gamma_f\colon B$-$\op{O}ps\to A$-$\op{O}ps$ to be the functor induced by composition with $f$.  The subscript notation will be suppressed when appropriate.  Note that $\Gamma$ preserves weak equivalences and fibrations and since $\Gamma$ is a right adjoint, it is a right Quillen functor.  The left adjoint $L$ can be realized as a left Kan extension or as a pushout of operads $\op{Q}\leftarrow A \rightarrow B$.

\begin{lemma}\label{welem}  If $f$ is a quasi-isomorphism, the Quillen adjunction $(L,\Gamma)$ is a Quillen equivalence.
\end{lemma}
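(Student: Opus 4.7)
The plan is to apply the standard criterion that a Quillen adjunction is a Quillen equivalence provided the derived unit is a weak equivalence on cofibrant objects and the right adjoint reflects weak equivalences between fibrant objects. Every object of $\op{O}ps$ is fibrant in the Berger--Moerdijk model structure, and this passes to the undercategory $A\searrow\op{O}ps$, so both conditions can be checked without fibrant replacement.

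The condition on $\Gamma_f$ is immediate: precomposition with $f$ leaves the underlying dg operad (and even the underlying symmetric sequence) untouched, so $\Gamma_f$ preserves and reflects all weak equivalences, and in particular does so between fibrant objects.

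For the unit, let $\op{P}$ be cofibrant in $A\searrow\op{O}ps$, which by the inherited model structure is equivalent to the structure map $\alpha\colon A\to\op{P}$ being a cofibration in $\op{O}ps$. As noted in the text, $L\op{P}$ is the pushout of $\op{P}\xleftarrow{\alpha}A\xrightarrow{f}B$ in $\op{O}ps$, and on underlying operads the unit $\eta_{\op{P}}\colon\op{P}\to\Gamma_f L\op{P}$ is precisely the canonical map into this pushout. Left properness of $\op{O}ps$---explicitly invoked in the preamble above---says that the pushout of a weak equivalence along a cofibration is again a weak equivalence. Applying this to the weak equivalence $f$ pushed out along the cofibration $\alpha$ shows that $\eta_{\op{P}}$ is a weak equivalence.

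Combining the two points, for cofibrant $\op{P}$, arbitrary $\op{Q}\in B\searrow\op{O}ps$, and adjoint maps $\phi\colon\op{P}\to\Gamma_f\op{Q}$ and $\psi\colon L\op{P}\to\op{Q}$, the factorization $\phi=\Gamma_f(\psi)\circ\eta_{\op{P}}$ shows that $\phi$ is a quasi-isomorphism iff $\psi$ is, which is the Quillen equivalence condition. The only care needed---more bookkeeping than a genuine obstacle---is identifying cofibrancy in the undercategory with cofibrancy of the structure map in $\op{O}ps$ and transferring left properness to $A\searrow\op{O}ps$; both points follow because the forgetful functor $A\searrow\op{O}ps\to\op{O}ps$ creates all three classes of distinguished morphisms, as already recorded in the excerpt.
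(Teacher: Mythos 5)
Your proof is correct and follows essentially the same route as the paper: identify cofibrant objects of $A$-$\op{O}ps$ with cofibrations $A\to\op{P}$ in $\op{O}ps$, use left properness to see that the unit $\op{P}\to\Gamma_f L\op{P}$ (the pushout of $f$ along that cofibration) is a weak equivalence, and combine this with the fact that $\Gamma_f$ reflects weak equivalences via the standard criterion (Hovey, Cor.\ 1.3.16). The remarks about fibrancy of all objects and the explicit adjoint factorization are just unpacking what the paper leaves to the cited reference.
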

\begin{proof}  If $A\to\op{O}$ is a cofibrant object in $A$-$\op{O}ps$ then the map $A\to\op{O}$ is a cofibration in $\op{O}ps$.  Using left properness of this category we know the pushout of $f$ along this cofibration is a weak equivalence $\op{O}\stackrel{\sim}\to L\op{O}$ in $\op{O}ps$.  In particular, $\op{O}\to \Gamma L\op{O}$ is a weak equivalence in $A$-$\op{O}ps$.  This along with the fact that $\Gamma$ reflects weak equivalences proves the claim via the standard theory (see \cite{Hovey} Cor. 1.3.16).
\end{proof}

%\subsection{Right Kan extensions.}
We now consider the special case of an inclusion $\iota\colon A\hookrightarrow A\oplus B$ between dg associative algebras $A$ and $A\oplus B$.  We define $T_\iota\colon A$-$\op{O}ps\to (A\oplus B)$-$\op{O}ps$ to be the trivial extension by $0$.  It is again immediately clear from the model structure on the undercategories that $T$ preserves weak equivalences and fibrations and since $T$ is a right adjoint, it is a right Quillen functor.  We define its left adjoint $\Phi$ on a morphism $\epsilon\colon A\oplus B \to \op{P}$ by taking the quotient $\op{P}/(\epsilon(B))$.

In particular, for such an inclusion we have a pair of adjunctions:

\begin{equation*}
\xymatrix{ A\text{-}\op{O}ps  \ar@<-1.5ex>@{.>}[rr]_L  \ar@<1.5ex>[rr]^T   &  &  \ar@/^1pc/@<1.5ex>[ll]^\Gamma  \ar@/_1pc/@<-1.5ex>@{.>}[ll]_\Phi  (A\oplus B)\text{-}\op{O}ps}
\end{equation*}
We will typically be concerned with the inclusions $k\hookrightarrow H^\ast(S^1)$ and $k\hookrightarrow \Omega H_\ast(BS^1)$.

\begin{remark}  We may view the categories $A$-$\op{O}ps$ etc. as categories of monoidal functors \cite{KW}, in which case these adjunctions suggest a formal analogy with Verdier duality which could be further explored.
\end{remark}

The functors $T$ and $\Gamma$ preserve weak equivalences between fibrant objects (and hence all objects).  The functors $L$ and $\Phi$ preserve weak equivalences between cofibrant objects, and so cofibrant replacement will yield a well defined functor on the homotopy category.  Let $\tilde{\Phi}(\op{Q}):=\Phi(q\op{Q})$ and similarly for $L$, where $q$ means a choice of cofibrant replacement {\it in the undercategory}.  Then we have adjunctions $(\tilde{L}_f, \Gamma_f)$ and $(\tilde{\Phi}_\iota, T_\iota)$ between the respective homotopy categories corresponding to any morphism $f$ and any inclusion $\iota$.  We may now record the following technical lemma for future use.

\begin{lemma}\label{welem2}  Let $A\stackrel{\iota}\hookrightarrow A\oplus B \stackrel{f}\longrightarrow A\oplus C$ be a sequence of morphisms of associative dg algebras where $f$ is of the form $f=id_A\oplus\bar{f}$, and where $\iota$ is the inclusion.  If $f$ is a weak equivalence then for every $\op{Q}\in (A\oplus C)$-$\op{O}ps$ there is a zig-zag of weak equivalences of $A$-$\op{O}ps$ connecting
\begin{equation*}
\tilde{\Phi}_{f\circ \iota}(\op{Q})\sim \tilde{\Phi}_\iota(\Gamma\op{Q})
\end{equation*}
\end{lemma}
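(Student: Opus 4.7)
The plan is to factor the problem through an underived functor identity and then invoke Lemma \ref{welem}.

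First, I would verify by a direct diagram chase that the right adjoints satisfy $\Gamma_f\circ T_{f\circ\iota} = T_\iota$ as functors from $A$-$\op{O}ps$ to $(A\oplus B)$-$\op{O}ps$. Given an $A$-operad $\op{P}$, the composite $\Gamma_f T_{f\circ\iota}(\op{P})$ first extends $\op{P}$ to an $(A\oplus C)$-operad with $C$ acting by zero, then pulls back along $f = \mathrm{id}_A \oplus \bar{f}$, whereupon $B$ acts through $\bar{f}$ into $C$ and hence trivially---which is precisely $T_\iota(\op{P})$. Passing to left adjoints then yields a natural isomorphism $\Phi_{f\circ\iota}\circ L_f \cong \Phi_\iota$ at the underived level.

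Next, I would promote this to a comparison of left derived functors. Since $L_f$ is a left Quillen functor it preserves cofibrancy, so for any cofibrant $(A\oplus B)$-operad $\op{P}$ the object $L_f\op{P}$ is already cofibrant in $(A\oplus C)$-$\op{O}ps$; hence $\tilde{\Phi}_{f\circ\iota}(\tilde{L}_f\op{P})$ may be computed as $\Phi_{f\circ\iota}(L_f\op{P}) = \Phi_\iota(\op{P}) = \tilde{\Phi}_\iota(\op{P})$ with no further cofibrant replacement required. Applying this identity after cofibrantly replacing $\Gamma_f\op{Q}$ in $(A\oplus B)$-$\op{O}ps$ produces a zig-zag of weak equivalences
\[
\tilde{\Phi}_\iota(\Gamma_f\op{Q}) \;\sim\; \tilde{\Phi}_{f\circ\iota}\bigl(\tilde{L}_f\Gamma_f\op{Q}\bigr).
\]

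Finally, because $f$ is a quasi-isomorphism of dg algebras, Lemma \ref{welem} asserts that $(L_f,\Gamma_f)$ is a Quillen equivalence, so the derived counit $\tilde{L}_f\Gamma_f\op{Q}\to\op{Q}$ is a weak equivalence in $(A\oplus C)$-$\op{O}ps$. Since $\tilde{\Phi}_{f\circ\iota}$ descends to the homotopy category, applying it gives a further weak equivalence $\tilde{\Phi}_{f\circ\iota}\bigl(\tilde{L}_f\Gamma_f\op{Q}\bigr) \sim \tilde{\Phi}_{f\circ\iota}(\op{Q})$, which chains with the previous zig-zag to yield the desired comparison in $A$-$\op{O}ps$.

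The principal technical point---where the argument can go wrong if one is careless---is the promotion in the second paragraph of the strict identity $\Phi_{f\circ\iota}\circ L_f = \Phi_\iota$ to an identity of total left derived functors. The key ingredient is that $L_f$, being left Quillen, preserves cofibrant objects, so no extra resolution is needed when composing; left properness of the undercategory model structure (already invoked in the proof of Lemma \ref{welem}) ensures that the cofibrant-replacement zig-zags behave well under each of the functors in sight.
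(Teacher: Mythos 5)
Your proposal is correct and follows essentially the same route as the paper's proof: establish the strict identity $\Gamma_f\circ T_{f\circ\iota}=T_\iota$ and hence $\Phi_{f\circ\iota}\circ L_f\cong\Phi_\iota$ by adjointness, use that $L_f$ preserves cofibrant objects to avoid an extra resolution, and then invoke the Quillen equivalence from Lemma \ref{welem} to make the derived counit a weak equivalence. The only cosmetic difference is that the paper writes out the explicit zig-zag $\Phi(q\Gamma\op{Q})\leftarrow\Phi_\iota(q\Gamma q\op{Q})\cong\Phi_{f\circ\iota}(Lq\Gamma q\op{Q})\to\Phi_{f\circ\iota}(q\op{Q})$, replacing $\op{Q}$ by $q\op{Q}$ before applying $\Gamma_f$ so that the counit comparison is a weak equivalence between cofibrant objects, whereas you phrase that last step in the homotopy category.
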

\begin{proof}
We continue to write $q$ for cofibrant replacement in any of these undercategories.

Since $(L_f,\Gamma_f)$ is a Quillen equivalence we know that the composite $Lq\Gamma q\op{Q}\to L\Gamma q\op{Q}\to q\op{Q}$ is a weak equivalence.  Moreover $L$ preserves cofibrant objects (since cofibrations are closed under pushout) and so this composite is a weak equivalence between cofibrant objects.  Hence the induced map
\begin{equation*}
\Phi_{f\circ \iota}( Lq\Gamma q\op{Q} )\stackrel{\sim}\to\Phi_{f\circ \iota}( q\op{Q} )
\end{equation*}
is a weak equivalence.  

The assumptions here imply that $\Gamma_f\circ T_{f\circ \iota}=T_\iota$ and so by adjointness, we know $\Phi_{f\circ \iota}\circ L_f \cong \Phi_\iota$.  Therefore $\Phi_{f\circ \iota}( Lq\Gamma q\op{Q} )\cong \Phi_\iota(q\Gamma q\op{Q})$ and so we have
\begin{equation*}
\tilde{\Phi}_\iota(\Gamma\op{Q})=\Phi(q\Gamma\op{Q})\stackrel{\sim}\longleftarrow \Phi_\iota(q\Gamma q\op{Q})  \cong \Phi_{f\circ \iota}( Lq\Gamma q\op{Q} )\stackrel{\sim}\to\Phi_{f\circ \iota}( q\op{Q} )=\tilde{\Phi}_{f\circ \iota}(\op{Q})
\end{equation*}
\end{proof}

\section{Trivializing $\Delta$.}  In this section we revisit the literature on mixed complexes, BV algebras, and trivializing $\Delta$ to extract what will be needed.  We primarily follow \cite{KMS}, with influence from \cite{DCV} and \cite{DSV}.

A mixed complex $(A,d,\Delta)$ is a chain complex $(A,\Delta)$ and a cochain complex $(A,d)$ such that $d\Delta+\Delta d=0$.  Here the degrees have been chosen to be consistent with the example of Hochschild cohomology, but this is merely our convention.  Given a mixed complex, let $(End_A,\partial)$ be the cochain complex of endomorphisms of $(A,d)$.  The cycle $\Delta$ is homologically trivial if there is a $\beta_1$ of degree $-2$ such that $\partial(\beta_1)=\Delta$.  Given such a $\beta_1$ we then encounter the cycle $\beta_1\circ\Delta$.  This cycle will be homologically trivial if there is a $\beta_2$ of degree $-4$ such that $\partial(\beta_2)=\beta_1\circ\Delta$.  Given such a $\beta_2$ we encounter the cycle $\beta_2\circ\Delta$ and so on.  We are thus led to the following definition.

\begin{definition}\label{degdef}  Let $(A,d,\Delta)$ be a mixed complex.  A trivialization of $\Delta$ is a sequence $\{\beta_i\}_{i\geq 0}\in End_{(A,d)}$ with $\beta_0=id_A$ such that $\partial(\beta_i)=\beta_{i-1}\circ\Delta$.  In particular, $|\beta_i|=-2i$.
\end{definition}

We write $(\kdb,\partial)$ for the dg associative algebra which encodes the unary operations on a trivialized mixed complex.  Explicitly, $\kdb:=k[\Delta]\langle \beta_1,\beta_2,...\rangle$, the free graded associative algebra on $\{\beta_i\}$ over the graded commutative algebra $k[\Delta]$.  The differential takes $\partial(\beta_i)=\beta_{i-1}\Delta$ and $\partial(\Delta)=0$, and the degrees of elements are as above.  In particular, $\Delta$ being of odd degree implies $\Delta^2=0$.   This complex may be interpreted as a non-commutative analog of $ES^1$; see Section $\ref{combsec}$.  The operations $\partial(\beta_i)$ correspond to differentials in the Hochschild to cyclic cohomology spectral sequence;  see Section $\ref{cycsec}$.

We write $A[[z]]:=A\tensor k[[z]]$, meaning the completed tensor product.
\begin{lemma}\label{trivlem}  Let $z$ be a variable of degree $2$ and let $(A,d,\Delta)$ be a mixed complex.  A trivialization of $\Delta$ is equivalent to a $z$-linear isomorphism of complexes $(A[[z]], d+z\Delta) \cong (A[[z]], d)$.
\end{lemma}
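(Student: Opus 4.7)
The plan is to package the sequence $\{\beta_i\}$ into a single series operator
\begin{equation*}
\Phi := \sum_{i \geq 0} \beta_i z^i \in \mathrm{End}_{k[[z]]}(A[[z]]),
\end{equation*}
and to observe that the data on each side of the claimed equivalence transcribes tautologically into the corresponding data on the other side.

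First, I would check that $\Phi$ is a well-defined $z$-linear endomorphism of $A[[z]]$ of total degree zero: since $|z| = 2$ and $|\beta_i| = -2i$, every term $\beta_i z^i$ has degree $0$, and $z$-linearity is built in. Because $\beta_0 = id_A$, the operator takes the form $id + z\Psi$ for some $\Psi \in \mathrm{End}_{k[[z]]}(A[[z]])$ and is therefore invertible, with inverse $\sum_{n \geq 0}(-z\Psi)^n$ converging in the $z$-adic topology. Thus $\Phi$ is automatically a $z$-linear isomorphism of the underlying graded modules.

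Second, and this is the main content, I would expand the chain-map equation $d \circ \Phi = \Phi \circ (d + z\Delta)$ by comparing coefficients of $z^i$. Using that $z$ is central and that $|\beta_i|$ is even (so $\partial(\beta_i) = d\beta_i - \beta_i d$), one obtains
\begin{equation*}
d \circ \Phi - \Phi \circ (d + z\Delta) = \sum_{i \geq 0}\bigl(\partial(\beta_i) - \beta_{i-1}\Delta\bigr)z^i,
\end{equation*}
with the convention $\beta_{-1} := 0$. This vanishes if and only if $\partial(\beta_i) = \beta_{i-1}\Delta$ for every $i \geq 0$, which is exactly the defining system of Definition \ref{degdef} (the $i = 0$ relation reducing to $\partial(id_A) = 0$).

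For the converse, given a $z$-linear chain isomorphism $\Phi$ whose reduction modulo $z$ equals $id_A$ (a normalization one can always force by precomposing with the inverse of the $(A,d)$-chain automorphism $\Phi|_{z=0}$), expand $\Phi$ as a power series in $z$; the degree-$0$ condition forces $|\beta_i| = -2i$, and the same coefficient-by-coefficient reading of the chain-map identity recovers the trivialization relations. The hard part is essentially nonexistent — the lemma is a bookkeeping exercise — and the only real care needed is to line up the degree conventions so that $\Phi$ sits in degree zero and to interpret the coefficient-wise chain-map equation as the recursive trivialization identities.
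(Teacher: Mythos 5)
Your proof is correct and takes essentially the same approach as the paper, which defines $F(a)=\sum_i\beta_i(a)z^i$, notes it is a dg map whose leading term is invertible, and conversely extracts the $\beta_i$ as the coefficients of $F(a)$; your coefficient-by-coefficient verification of the chain-map identity just makes explicit what the paper leaves implicit. One micro-quibble on your converse: to normalize a general isomorphism so that its constant term is $id_A$ you should \emph{post}compose with the $z$-linear extension of $(\Phi|_{z=0})^{-1}$ (an automorphism of the target $(A[[z]],d)$), not precompose, since that extension need not commute with $z\Delta$ on the source.
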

\begin{proof}  If $\{\beta_i\}$ are such a trivialization of $\Delta$, define a $z$-linear map $F\colon (A\tensor k[[z]], d+z\Delta) \to (A\tensor k[[z]], d)$ by $F(a)=\sum_i\beta_i(a)z^i$ for $a\in A$.  This is a dg map and its leading term is invertible.  Conversely given such an isomorphism we may extract the sequence $\{\beta_i(a)\}$ as the coefficients of $F(a)$.
\end{proof}

\begin{remark} The papers \cite{KMS}, \cite{DCV}, \cite{DSV} consider various equivalent forms of the data in Definition $\ref{degdef}$.  For example \cite{KMS} consider exponential coordinates for a trivialization, i.e. elements $\phi_i$ in the algebra $\kdb$ related to $\beta_i$ via $\beta_1=\phi_1$, $\beta_2=\phi_2+\phi_1^2/2$, and $\beta_3=\phi_3+(\phi_2\phi_1+\phi_1\phi_2)/2+\phi_1^3/6$, etc.  Note that in \cite{KMS}, the notation $\Phi_i$ is used in place of our $\beta_i$.  This data is equivalent to what \cite{DCV} call `Hodge-to-de Rham degeneration data', see Theorem 2.1 \cite{DSV}.  Note this data is weaker than the classical $d\bar{d}$-condition of \cite{DGMS}, which will not in general be satisfied in the examples we consider.
\end{remark}

\subsection{The KMS model for $\tilde{\Phi}$.}  In this section we consider the adjunction $(\Phi,T)$ with respect to the inclusion $k\to H^\ast(S^1)$. We further use the shorthand notation $S^1$-$\op{O}ps:=H^\ast(S^1)$-$\op{O}ps$, so we have $\Phi\colon S^1$-$\op{O}ps\leftrightarrows\op{O}ps\colon T$.

If $\op{Q}$ is an $S^1$-operad with operator $\Delta_\op{Q}$ we define an $S^1$-operad $W(\op{Q})$ by:
\begin{equation}
W(\op{Q}):=(\op{Q}\star \kdb, d_{W(\op{Q})}:=d_\op{Q}+\partial_\Delta-\partial_{\Delta_\op{Q}})
\end{equation}
Here $\star$ means free product,  $\partial_{\Delta_{\op{Q}}}(\beta_i)=\beta_{i-1}\Delta_\op{Q}$ and is zero on other generators, and similarly for $\partial_\Delta$.  This is viewed as an $S^1$-operad via $\Delta$ (not $\Delta_\op{Q})$.  We define $\Phi_{KMS}(\op{Q}):=\Phi(W(\op{Q}))$.  Viewing $\kdb$ as a non-commutative analog of $ES^1$, one expects that quotienting by the $\Delta$ action in $W(\op{Q})$ will be homotopy invariant.  Indeed \cite{KMS} shows:

\begin{theorem}\cite{KMS} As derived functors $\Phi_{KMS}\cong\tilde{\Phi}$.
\end{theorem}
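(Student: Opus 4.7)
The plan is to exhibit $W(\op{Q})$ as a functorial cofibrant resolution of $\op{Q}$ in $S^1$-$\op{O}ps$. Granting this, $\Phi(W(\op{Q}))=\Phi_{KMS}(\op{Q})$ and $\Phi(q\op{Q})=\tilde{\Phi}(\op{Q})$ both compute the left derived functor of $\Phi$ at $\op{Q}$, hence are naturally weakly equivalent.

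First, I would construct the natural projection $\pi_{\op{Q}}\colon W(\op{Q})\to\op{Q}$ sending the adjoined $\Delta$ to $\Delta_\op{Q}$ and each $\beta_i$ to $0$. The twisted piece $\partial_\Delta-\partial_{\Delta_\op{Q}}$ of the differential carries $\beta_i$ to $\beta_{i-1}(\Delta-\Delta_\op{Q})\in\ker\pi_{\op{Q}}$, so $\pi_{\op{Q}}$ is a chain map, and it is a morphism of $S^1$-operads by construction. To see $\pi_{\op{Q}}$ is a quasi-isomorphism, I would filter $W(\op{Q})=\op{Q}\star\kdb$ by the number of letters from $\kdb\setminus k$ in a reduced word. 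On the associated graded the differential reduces to $d_\op{Q}+\partial_\Delta$, and the $E^1$-page computes $\op{Q}$ tensored with $H^*(\kdb,\partial_\Delta)=k$; the required acyclicity of $\kdb$ in positive degrees can be verified by exhibiting the assignment $\beta_{i-1}\Delta\mapsto\beta_i$ as part of a contracting homotopy, reflecting the interpretation of $\kdb$ as a non-commutative model for $C_*(ES^1)$ that motivates its construction in \cite{KMS}.

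Next, I would check that $W(\op{Q})$ is cofibrant in $S^1$-$\op{O}ps$ when $\op{Q}$ is. At the level of underlying graded operads, the structure map $H^\ast(S^1)\hookrightarrow\kdb$ attaches the free generators $\beta_1,\beta_2,\dots$ in sequence with boundaries in the previously constructed subcomplex, so it is a relative cell complex and hence a cofibration in $\op{O}ps$. Taking the free product with $\op{Q}$ preserves this property, and the perturbation of the differential used to define $W$ does not alter cofibrancy. For general $\op{Q}$, apply $W$ to a cofibrant replacement $q\op{Q}\to\op{Q}$.

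Finally, $W$ preserves weak equivalences (verified by the same filtration), so $W(q\op{Q})\to W(\op{Q})$ is a weak equivalence; combined with the first step, $W(q\op{Q})\to q\op{Q}$ is a weak equivalence between cofibrant $S^1$-operads. Since $\Phi$ is left Quillen it preserves such equivalences, giving the chain $\Phi_{KMS}(\op{Q})=\Phi(W(\op{Q}))\simeq \Phi(W(q\op{Q}))\simeq \Phi(q\op{Q})=\tilde{\Phi}(\op{Q})$, the desired natural equivalence of derived functors. The main obstacle is the quasi-isomorphism step: one has to untangle the twisted differential on the non-commutative free product $\op{Q}\star\kdb$ carefully enough to conclude that the kernel of $\pi_\op{Q}$ is acyclic and that the relevant spectral sequence collapses as expected.
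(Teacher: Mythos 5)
First, a point of comparison that is really a non-comparison: the paper offers no proof of this statement --- it is imported wholesale from \cite{KMS} --- so there is no internal argument to measure yours against. Your overall strategy (exhibit $W(\op{Q})\to\op{Q}$ as a weak equivalence with $W(\op{Q})$ cofibrant over $H^\ast(S^1)$, then invoke left-Quillenness of $\Phi$) is the natural reading of the theorem and is consistent with how the paper uses it: the zig-zag $\op{Q}\stackrel{\sim}\leftarrow W(\op{Q})\rightarrow \Phi_{KMS}(\op{Q})$ displayed after Corollary $\ref{KMScor}$ presupposes exactly your first step.

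That said, two of your steps have genuine gaps. First, the filtration by the number of letters from $\kdb\setminus k$ is not preserved by $\partial_\Delta$: it sends the single letter $\beta_i$ to the two-letter word $\beta_{i-1}\Delta$, so $d_\op{Q}+\partial_\Delta$ is not the associated graded differential of that filtration. You need a filtration that $d_\op{Q}$ and $\partial_\Delta$ preserve while $\partial_{\Delta_\op{Q}}$ strictly decreases --- e.g.\ total weight with $w(\beta_i)=i$, $w(\Delta)=1$ and weight zero on $\op{Q}$ --- after which the graded pieces decompose via the free-product combinatorics into tensor factors involving the augmentation ideal of $(\kdb,\partial_\Delta)$, whose acyclicity does the work; convergence also requires a boundedness check. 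Second, and more seriously, the link $\Phi(W(\op{Q}))\simeq\Phi(W(q\op{Q}))$ cannot be extracted from ``$\Phi$ is left Quillen,'' because $W(\op{Q})$ is not cofibrant for general $\op{Q}$, and the operads the paper actually feeds into $\Phi_{KMS}$, namely $\fm$ and $\op{BV}$, are not cofibrant over $H^\ast(S^1)$; indeed $k\to H^\ast(S^1)$ is itself not a cofibration of dg algebras because of the relation $\Delta^2=0$, which also undercuts your free-product cofibrancy argument as stated. The content of the theorem is precisely that $\Phi\circ W$ is homotopy invariant \emph{without} a cofibrancy hypothesis on the input. The way to close this is to compute the quotient explicitly, $\Phi_{KMS}(\op{Q})\cong\bigl(\op{Q}\star k\langle\beta_1,\beta_2,\dots\rangle,\ d_\op{Q}-\partial_{\Delta_\op{Q}}\bigr)$, and verify directly (by a filtration of the same kind) that this explicit functor carries weak equivalences of $S^1$-operads to weak equivalences; with that in hand your chain of equivalences assembles correctly.
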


\begin{corollary}\label{KMScor}  Suppose $\op{Q}$ is an $S^1$-operad and suppose $(A,d)$ is a $\op{Q}$-algebra via a unary square-zero operator $\Delta_\op{Q}$.  A trivialization of $\Delta_\op{Q}$ in $(A,d,\Delta_\op{Q})$ permits the lifting of $(A,d)$ from a $\op{Q}$-algebra to a $\Phi_{KMS}(\op{Q})$-algebra.
\end{corollary}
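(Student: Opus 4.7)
The plan is to unwind the adjunction $(\Phi, T)$ and construct the required algebra structure explicitly. Since $\Phi_{KMS}(\op{Q}) = \Phi(W(\op{Q}))$, giving a $\Phi_{KMS}(\op{Q})$-algebra structure on $A$ is the same as giving an operad map $\Phi(W(\op{Q})) \to \op{End}_A$, which by the adjunction $\Phi \dashv T$ is equivalent to an $S^1$-operad map $\phi \colon W(\op{Q}) \to T(\op{End}_A)$. Since $T(\op{End}_A)$ carries the \emph{trivial} $S^1$-action, any such $\phi$ must send the free generator $\Delta \in \kdb \subset W(\op{Q})$ to zero. The plan is simply to construct $\phi$ by hand using the trivialization data provided by hypothesis.

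Since $W(\op{Q}) = \op{Q} \star \kdb$ is a free product, it suffices to specify $\phi$ on generators: on $\op{Q}$ take the given structure map $\alpha \colon \op{Q} \to \op{End}_A$; set $\phi(\Delta) = 0$; and set $\phi(\beta_i) = \beta_i^A$ (up to a sign $(-1)^i$ if required for normalization), where $\{\beta_i^A\}$ is the provided trivialization of the operator $\Delta_\op{Q}^A := \alpha(\Delta_\op{Q})$. This determines $\phi$ as a map of graded operads; the content is to verify compatibility with differentials. On $\op{Q}$-generators this is just the fact that $\alpha$ is a map of dg operads, and on $\Delta$ it is vacuous. On $\beta_i$ one computes $\phi(d_W \beta_i) = \phi\bigl(\beta_{i-1}(\Delta - \Delta_\op{Q})\bigr) = \pm \beta_{i-1}^A \circ \Delta_\op{Q}^A$, while Definition $\ref{degdef}$ gives $\partial \phi(\beta_i) = \pm \beta_{i-1}^A \circ \Delta_\op{Q}^A$. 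The two sides agree once signs are fixed consistently, so the trivialization is exactly what is needed to promote $\phi$ to a chain map.

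Finally, because $\phi(\Delta) = 0$ and the $S^1$-action on the target is trivial, $\phi$ factors as an $S^1$-operad map through $W(\op{Q})/(\Delta) = \Phi_{KMS}(\op{Q})$, producing the asserted lift. The only non-formal step is careful sign bookkeeping between the conventions of Definition $\ref{degdef}$ and the differential $d_W = d_\op{Q} + \partial_\Delta - \partial_{\Delta_\op{Q}}$ on $W(\op{Q})$; the rest of the argument is a mechanical unwinding of the $(\Phi, T)$ adjunction and the freeness of the construction $W(\op{Q})$.
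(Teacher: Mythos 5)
Your proposal is correct and follows essentially the same route as the paper: both unwind the adjunction $Hom(\Phi_{KMS}(\op{Q}),End_A)\cong Hom(W(\op{Q}),TEnd_A)$ and define the $S^1$-operad map on $W(\op{Q})=\op{Q}\star\kdb$ by the given structure map on $\op{Q}$, $\Delta\mapsto 0$, and $\beta_i$ mapping to the trivialization data of Lemma \ref{trivlem}. The only difference is that you spell out the chain-map verification and the sign adjustment on the $\beta_i$, which the paper leaves implicit.
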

\begin{proof}  Define a map $Hom(\Gamma(\op{Q}),End_A) \to Hom(W(\op{Q}),TEnd_A)$ as follows.  Given $\Gamma(\op{Q})\to End_A$ we define a map of $S^1$-operads $W(\op{Q})\to TEnd_A$ by mapping $\op{Q}$ via said morphism, sending $\Delta\mapsto 0$ and sending $\beta_i$ to the coefficients of the degeneration, as per Lemma $\ref{trivlem}$.  Then $Hom(\Phi_{KMS}(\op{Q}),End_A)\cong Hom(W(\op{Q}),TEnd_A)$ by adjointness, hence the claim.
\end{proof}

Note that although there is a zig-zag of morphisms of dg operads:
\begin{equation*}
\op{Q}\stackrel{\sim}\leftarrow W(\op{Q})\rightarrow \Phi_{KMS}(\op{Q})
\end{equation*}
when we speak of lifting a $\op{Q}$-algebra to a $\Phi_{KMS}(\op{Q})$-algebra we use the corollary (i.e. inclusion), which is not the same as composition in the diagram.

\section{Cyclic cohomology operations.}\label{cycsec}  In this section we give an overview of the mixed complex associated to a multiplicative cyclic operad, its associated spectral sequence, and the accompanying homotopy BV and gravity structures.  The material on cyclic operads and associated complexes comes primarily from \cite{Ward2} sections 2-4.  The material on spectral sequences associated to mixed complexes can be found in \cite{Loday}. 

\subsection{Deformation complexes and mixed complexes from cyclic operads}

Let $\op{O}$ be a unital cyclic operad valued in dg vector spaces graded cohomologically.  We define $\op{O}^\ast:=\prod_n\Sigma\mathfrak{s}\op{O}(n)$, where $\Sigma$ is the degree shift operator and $\mathfrak{s}$ is the operadic suspension.  This vector space has a natural Lie bracket $\{-,-\}$ for which a Maurer-Cartan element is equivalent to a morphism $\mu\colon\op{A}_\infty\to\op{O}$.  Given this data we form a differential in the usual way:  $d_\mu:= d_\op{O}+\{\mu,-\}$, where $d_\op{O}$ is the original (arity-wise) differential in $\op{O}$.

\begin{definition}  Let $\mu\colon\op{A}_\infty\to \op{O}$ be a map of cyclic operads.  Define $CH^\ast(\op{O},\mu)$ to be the complex $(\op{O}^\ast, d_\mu)$.  This complex is called the deformation complex of $\mu$.
\end{definition}

The complex $CH^\ast(\op{O},\mu)$ can be endowed with a square zero operator $\Delta$ of degree $-1$ analogous to the construction of Connes' $B$ operator.  Explicitly $\Delta:=Ns_0(1-t)$ where $s_0$ is the extra degeneracy (via the unit), $t$ is the arity-wise cyclic operator and $N$ is the arity-wise sum $\Sigma_i t^i$   This operator is square zero and commutes with $d_\mu$ and so $(\op{O}^\ast,d_\mu,\Delta)$ is a mixed complex.  See \cite{Ward2} section 3.2 for the details of this construction. 

To this mixed complex we will associate a bicomplex with vertical differential $d_\mu$ (pointing up) and horizontal differential $\Delta$ (pointing right) and filter this bicomplex by columns.  We would like this filtration to be exhaustive, so for simplicity we impose the following 
{\bf assumption:} the complex $\op{O}^\ast$ is supported in non-negative degrees.  This will be the case if $\op{O}(n)$ is contained in degrees $\geq -n$.  Of course there are weaker boundedness conditions under which this filtration is exhaustive.  In this case, we may form the bicomplex in the first quadrant.  We filter this bicomplex by columns and construct the associated spectral sequence.  The $0$-page has $E_0^{pq}=\Sigma^{2p}\op{O}^{q-p}=\prod_{q=0}^\infty\Sigma^{q+2p}\op{O}(n)_{q-p-n}$ the $1$-page has $E_1^{pq}=\Sigma^{2p}H^{q-p}(\op{O}^\ast,d)=\Sigma^{2p}HH^{q-p}(\op{O},\mu)$, and the $2$-page has $E_2^{pq}=H^p(\Sigma^{2\ast}HH^{q-\ast}(\op{O},\mu), [\Delta])$.  In particular, the $1$-page differential is $d^1:=[\Delta]$.  

If $[\Delta]=0$ on $HH$ then there exists a chain operation of degree $-2$ whose boundary is $\Delta$, from which we can build the 2-page differential.  Explicitly, in the notation of the previous section $d^2=\Delta\beta_1$.  Likewise $d^2$ will be zero if there exists a chain operation of degree $-4$, whose boundary is $\Delta\beta_1$, and explicitly one may calculate $d^3=\Delta(\beta_2-\beta_1^2)$.  Continuing in this vein we see existence of the $\beta_i$ is necessary and sufficient for degeneration:

\begin{lemma}(\cite{DSV} Proposition 1.5) Degeneration in this spectral sequence at the $1$-page is equivalent to a trivialization of $\Delta$ in the sense of Definition $\ref{degdef}$.
\end{lemma}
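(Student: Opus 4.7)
The plan is to identify both conditions with the existence of a single object: a $z$-linear filtered quasi-isomorphism $F\colon(\op{O}^\ast[[z]], d_\mu + z\Delta)\to(\op{O}^\ast[[z]], d_\mu)$. On one hand this is exactly the content of Lemma $\ref{trivlem}$, which recasts such an $F$ as a trivialization $\{\beta_i\}$ via $F(a)=\sum_i\beta_i(a)z^i$. On the other hand, the total complex of the first-quadrant bicomplex $(\op{O}^\ast, d_\mu, \Delta)$ equipped with its column filtration is, under the natural identification ``multiply by $z$ $\leftrightarrow$ shift one column right,'' precisely $(\op{O}^\ast[[z]], d_\mu + z\Delta)$ with its $z$-adic filtration, and the associated spectral sequence is the one described in the text.

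For \emph{trivialization implies degeneration}, the map $F$ of Lemma $\ref{trivlem}$ is $z$-linear, hence preserves the $z$-adic filtration; being an isomorphism, it induces an isomorphism of associated spectral sequences. The spectral sequence of $(\op{O}^\ast[[z]], d_\mu)$ collapses at $E_1$ trivially since its differential has no $\Delta$-component, so the original spectral sequence does as well. For the converse, I would construct the $\beta_i$ inductively from $\beta_0 = id$. The inductive step requires solving $\partial(\beta_r) = \beta_{r-1}\Delta$; the right side is automatically a $d_\mu$-cocycle because $\partial(\beta_{r-1}\Delta)=\beta_{r-2}\Delta^2 = 0$, and the key claim is that its cohomology class in $HH^\ast$ coincides, up to a polynomial correction in the previously constructed $\beta_i$ (as illustrated by $d^2 = [\Delta\beta_1]$ and $d^3 = [\Delta(\beta_2-\beta_1^2)]$ from the paragraph above the lemma), with the spectral-sequence differential $d^r$, which vanishes by hypothesis. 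Hence $\beta_r$ exists.

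The main obstacle is bookkeeping these nonlinear corrections relating the naive chain-level cocycle $\beta_{r-1}\Delta$ to the honest page differential $d^r$; these are also what motivate the exponential reparametrization $\{\phi_i\}$ referenced in the remark after Lemma $\ref{trivlem}$. The cleanest way to sidestep the combinatorics is to work entirely with $F$: building $F$ order by order in $z$ amounts to extending $F$ modulo $z^r$ to $F$ modulo $z^{r+1}$, and the obstruction to each extension is, by a standard filtered-induction argument, precisely the $r$-th page differential applied to the class constructed so far. Thus vanishing of all $d^r$ is equivalent to the existence of the full power series $F$, which by Lemma $\ref{trivlem}$ is equivalent to the existence of the trivialization, completing the proof as in \cite{DSV}.
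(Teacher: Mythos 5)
The paper itself offers no proof of this lemma: it is quoted from \cite{DSV} (Proposition 1.5), and the only in-text justification is the informal paragraph preceding it, which computes $d^1=[\Delta]$, $d^2=\Delta\beta_1$, $d^3=\Delta(\beta_2-\beta_1^2)$ and concludes ``continuing in this vein.'' Your argument is correct and is essentially a fleshed-out version of that sketch. The direction you handle via the filtered isomorphism $F$ is clean and complete: $F$ is $z$-linear, hence preserves the $z$-adic (column) filtration, its inverse is again $z$-linear, and the target $(\op{O}^\ast[[z]],d_\mu)$ has all page differentials zero for $r\geq 1$ since its differential preserves the filtration degree exactly. The converse is where the actual content of \cite{DSV} Proposition 1.5 lives, and you correctly identify the issue: one must show that the obstruction class $[\beta_{r-1}\Delta]\in H^\ast(End)$ (equivalently, since $k$ is a field, the induced operator on $HH^\ast$) agrees with the page differential $d^r$ up to corrections that are polynomial in the previously chosen $\beta_i$ and that vanish on cohomology by induction --- e.g.\ $\partial(\beta_1^2)=\Delta\beta_1+\beta_1\Delta$ reconciles $[\Delta\beta_1]$ with $[\beta_1\Delta]$ at the first nontrivial step --- together with the check that these classes are independent of the choices made at earlier stages. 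Your proposed order-by-order construction of $F$ is the standard and correct way to organize this; the only soft spot is that ``by a standard filtered-induction argument'' is doing the same amount of work the paper delegates to \cite{DSV}, so as written your proof is a faithful expansion of the paper's sketch rather than a genuinely different route.
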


This spectral sequence converges to the product total complex ToT of the original bicomplex.  The homology of the total complex is the cyclic cohomology of the pair $(\op{O}, \mu)$ and will be denoted $HC^\ast(\op{O},\mu)$.  The above lemma is a general statement about mixed complexes, but here we have additional structure allowing us to compute $HC^\ast(\op{O},\mu)$ in two ways.  Namely via the total complex $(\op{O}^\ast[[z]], d+z\Delta)$, where $deg(z)=2$, or by the product complex of invariants $C^\ast_\lambda(\op{O}, \mu):=(\prod_n \op{O}(n)^{\mathbb{Z}_{n+1}}, d_\mu)$.  The complex $C^\ast_\lambda(\op{O}, \mu)$ may also be called the cyclic deformation complex of $\mu$, and it computes $HC^\ast(\op{O},\mu)$ by the following Lemma.

\begin{lemma}\label{cyclem2} Inclusion $C^\ast_\lambda(\op{O}, \mu)\hookrightarrow (\op{O}^\ast[[z]], d+z\Delta)$ is a quasi-isomorphism.
\end{lemma}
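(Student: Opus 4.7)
The plan is to mimic the classical Connes--Tsygan comparison between the $(b,B)$-bicomplex and Connes' $\lambda$-complex (see \cite{Loday}, Chapter 2), transported to the present cyclic-operadic setting.  The two essential inputs are the formula $\Delta = N s_0 (1-t)$, which mirrors Connes' formula for the $B$-operator, and the semisimplicity of the $\mathbb{Z}/(n+1)$-action on $\op{O}(n)$ in characteristic zero.

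First, identify $(\op{O}^\ast[[z]], d+z\Delta)$ with the product total complex of the first-quadrant bicomplex $\mathcal{B}$ whose $(p,q)$-entry is $\Sigma^{2p}\op{O}^{q-p}$, with vertical differential $d_\mu$ and horizontal differential $\Delta$.  The hypothesis that $\op{O}^\ast$ is bounded below makes each total degree a finite direct sum, so convergence of all spectral sequences below is immediate.

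Second, introduce the operadic Tsygan double complex $\mathcal{T}$, a first-quadrant bicomplex with the same column spaces as $\mathcal{B}$, vertical differential $d_\mu$, and horizontal differentials alternating between $1-t$ and $N$.  In characteristic zero, each row of $\mathcal{T}$ is the standard two-periodic complex for $\mathbb{Z}/(n+1)$ in arity $n$, which is acyclic except in column zero where the homology is $\op{O}(n)^{\mathbb{Z}_{n+1}}$.  Filtering by columns collapses the resulting spectral sequence, giving a quasi-isomorphism $\mathrm{Tot}(\mathcal{T}) \simeq (C^\ast_\lambda, d_\mu)$ via projection onto cyclic invariants.  Then, using the identity $\Delta = N s_0 (1-t)$, build an explicit chain map between $\mathrm{Tot}(\mathcal{B})$ and $\mathrm{Tot}(\mathcal{T})$ with $s_0$ playing the role of the chain homotopy filling the factorization of $\Delta$.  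A further spectral-sequence argument shows this map is a quasi-isomorphism, and tracing its composition with the projection to cyclic invariants recovers the stated inclusion up to chain homotopy.

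The main obstacle is constructing and verifying the comparison map in the last step; however, the formal manipulations are identical to the classical cyclic-module case, with $\op{O}(n)$ playing the role of the arity-$n$ piece of a cyclic $k$-module, so one can essentially transport the existing proofs from the literature, citing \cite{Loday} for the details.
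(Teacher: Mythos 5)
Your proposal is correct and follows essentially the same route as the paper: the paper's proof simply cites Proposition 2.16 of \cite{Ward2} together with the standard ``killing contractible complexes'' comparison of \cite{Loday} 2.4.3, which is precisely the Connes--Tsygan bicomplex argument (semisimplicity of the $\mathbb{Z}/(n+1)$-actions in characteristic zero, acyclic rows built from $1-t$ and $N$, and the extra degeneracy $s_0$ mediating between the two total complexes) that you spell out. The only difference is that you make explicit the details the paper delegates to its references.
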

\begin{proof}  This follows from Proposition 2.16 of \cite{Ward2} and a standard `killing contractible complexes' argument as in \cite{Loday} 2.4.3.
\end{proof}

In the case that the spectral sequence associated to the mixed complex $(\op{O}^\ast,d_\mu,\Delta)$ degenerates at the $1$-page, we have $HC^\ast(\op{O},\mu)\cong HH^\ast(\op{O},\mu)[[z]]$.  This happens precisely when there is a trivialization of $\Delta$ so we make the following definition:

\begin{definition}\label{degdef2}  We say that a morphism of cyclic operads $\mu\colon\op{A}_\infty\to\op{O}$ is {\it cyclically degenerate} if there exists a trivialization of $\Delta$ in the mixed complex $(\op{O}^\ast,d_\mu,\Delta)$.
\end{definition}

\subsection{Operations on $CH$ and $C_\lambda$}

In the case that $\mu\colon\op{A}_\infty\to\op{O}$ is simply a map of operads (not cyclic) the classical Deligne conjecture says that $CH^\ast(\op{O}, \mu)$ is an algebra over a chain model for the little disks operad.  In the $A_\infty$ case this chain operad was constructed by Kontsevich and Soibelman \cite{KS} and was denoted $\mathsf{M}$ for minimal operad.  It is an insertion operad of planar 2-colored trees.  

If we now remember the cyclic structure of $\mu\colon\op{A}_\infty\to\op{O}$ we may upgrade the complex $CH^\ast(\op{O}, \mu)$ to an algebra over a chain model for the framed little disks.  In the cyclic $A_\infty$ setting this operad was constructed in \cite{Ward1}, where it was denoted $\op{TS}_\infty$.  In hindsight, this $\infty$ notation might be confusing, so we will instead use the notation $\fm$ for `framed minimal operad'.  For details see \cite{Ward1}, but the important properties of the chain operad $\fm$ are:
\begin{enumerate}
\item  There exists a zig-zag of quasi-isomorphisms of operads between $\fm$ and singular chains on the framed little disks.
\item  In particular, there exists a zig-zag of dg operads: $\fm\stackrel{\sim}\leftarrow\op{BV}_\infty\stackrel{\sim}\rightarrow \op{BV}$.
\item  The dg operad $\fm$ acts on $CH^\ast(\op{O}, \mu)$ for any such $\mu\colon\op{A}_\infty\to\op{O}$ recovering the known BV structure on cohomology.
\item  The dg operad $\fm$ is constructed as an insertion operad of planar 2-colored trees along with marked points on the boundaries of vertices, which are called spines.
\end{enumerate} 

We could instead consider the chain model provided by cellular chains on normalized cacti \cite{Vor, KCacti} which has all of the properties above, except requires that the multiplication be associative.

Since the invariants $C_\lambda^\ast(\op{O}, \mu)$ form a $d_\mu$-closed subspace of $CH^\ast(\op{O}, \mu)$ we can ask which of the operations in $\fm$ restrict to this complex.  In other words, which operations preserve the property of $t$-invariance?  This question was answered in \cite{Ward2} and the answer forms a sub-operad $\mathsf{M}_\circlearrowright\subset \fm$ whose homology is the homology operad of the punctured Riemann spheres, called the gravity operad in \cite{Geteq}.  In particular, this endows the cyclic cohomology $HC^\ast(\op{O},\mu)$ with the structure of a gravity algebra and the natural map $HC^\ast(\op{O},\mu)\to HH^\ast(\op{O},\mu)$ is a map of gravity algebras.  It is natural to ask when this structure lifts to the compactification, and this question is answered in the next section.

\section{Collecting Results.}
We shall now collect our main results in this section.  We continue to write $\Phi$ (with suppressed subscript) in place of $\Phi_{k\hookrightarrow H^\ast(S^1)}$.  We define $\fm_{hS^1}:=\Phi_{KMS}(\fm)$.

\begin{theorem}\label{chainmodelthm}  The dg operad $\fm_{hS^1}$ is a chain model for the hyper-commutative operad.  Explicitly, there exists a zig-zag of weak equivalences of dg operads $\fm_{hS^1}\stackrel{\sim}\leftarrow \dots \stackrel{\sim}\rightarrow S_\ast(\overline{\op{M}}_{\ast+1})$.   
\end{theorem}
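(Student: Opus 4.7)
The plan is to combine two cited inputs: the Kontsevich-Manin-Soibelman identification of $\tilde{\Phi}(\op{BV})$ with the hyper-commutative operad, and the formality of the Deligne-Mumford cyclic operad due to Guillén-Navarro-Pascual-Roig. The bridge between $\fm$ and strict $\op{BV}$ is the $\op{BV}_\infty$-resolution recorded in property (2) of $\fm$.

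The first and key step is to promote that zigzag to a zigzag in the category $S^1$-$\op{O}ps$, i.e., to verify that the arrows in
$$\fm \stackrel{\sim}\leftarrow \op{BV}_\infty \stackrel{\sim}\rightarrow \op{BV}$$
intertwine the respective $\Delta$-operators. This should hold by the construction of $\fm$ in \cite{Ward1}: $\fm$ is built from a $\op{BV}_\infty$-resolution together with a spine generator whose boundary realizes $\Delta$, compatibly with the distinguished BV-cycle of $\op{BV}_\infty$, and $\op{BV}_\infty \to \op{BV}$ sends that cycle to the strict $\Delta$.

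Once this is in hand, one applies the derived functor $\tilde{\Phi}$ of the adjunction associated with $k\hookrightarrow H^\ast(S^1)$. Since $\tilde{\Phi}$ preserves weak equivalences by the general model-categorical setup of Section~1, and since $\tilde{\Phi}\cong \Phi_{KMS}$ as derived functors, one obtains a zigzag of dg operads
$$\fm_{hS^1} = \Phi_{KMS}(\fm) \stackrel{\sim}\leftarrow \Phi_{KMS}(\op{BV}_\infty) \stackrel{\sim}\rightarrow \Phi_{KMS}(\op{BV}).$$
The main result of \cite{KMS} then furnishes a zigzag of weak equivalences between $\Phi_{KMS}(\op{BV})$ and $\op{H}y\op{C}om = H_\ast(\overline{\op{M}}_{\ast+1})$, expressing the classical principle that trivializing $\Delta$ in BV yields HyCom. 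Finally, formality of the Deligne-Mumford operad \cite{GNPR} provides a further zigzag of weak equivalences connecting $H_\ast(\overline{\op{M}}_{\ast+1})$ to $S_\ast(\overline{\op{M}}_{\ast+1})$. Splicing all of these zigzags gives the theorem.

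The main obstacle is the first step. Without checking the $S^1$-compatibility of the $\op{BV}_\infty$-resolution, one cannot apply $\Phi_{KMS}$ coherently across the zigzag, since $\Phi_{KMS}$ is a functor on $S^1$-$\op{O}ps$ rather than on bare operads. Once this compatibility is verified, the remaining steps are formal applications of adjunctions and citations of the two existing theorems.
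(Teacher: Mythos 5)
Your outline matches the paper's high-level strategy (combine the KMS identification $\tilde{\Phi}(\op{BV})\simeq \op{H}y\op{C}om$ with formality of $\overline{\op{M}}_{\ast+1}$, and reduce to comparing $\tilde{\Phi}(\fm)$ with $\tilde{\Phi}(\op{BV})$), but your ``first and key step'' contains a genuine gap, and you have in fact put your finger on exactly the point where the naive argument breaks. You propose to promote the zig-zag $\fm\stackrel{\sim}\leftarrow\op{BV}_\infty\stackrel{\sim}\rightarrow\op{BV}$ to a zig-zag in $S^1$-$\op{O}ps$, i.e.\ in the undercategory of $H^\ast(S^1)=k[\Delta]/(\Delta^2)$. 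This cannot be done: while $\fm$ and $\op{BV}$ both carry strictly square-zero $\Delta$-operators, the middle term $\op{BV}_\infty$ does not. Its unary part is (a model of) $\Omega H_\ast(BS^1)$, a free associative algebra in which $\Delta^2$ is only killed up to a homotopy given by a further generator; there is no nonzero square-zero element in the relevant degree of a free algebra, so $\op{BV}_\infty$ admits no $H^\ast(S^1)$-structure compatible with the two arrows. Consequently $\Phi_{KMS}(\op{BV}_\infty)$ is not even defined --- the differential $\partial_{\Delta_{\op{Q}}}$ in $W(\op{Q})$ squares to $\beta_{i-2}\Delta_{\op{Q}}^2$, so the construction requires a strict mixed-complex structure --- and your middle term of the resulting zig-zag does not exist.

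The paper's proof is designed precisely to route around this. It observes that the zig-zag $\fm\leftarrow\op{BV}_\infty\rightarrow\op{BV}$ \emph{does} live in the undercategory of the Koszul resolution $\Omega H_\ast(BS^1)$, with $f\colon\Omega H_\ast(BS^1)\to H^\ast(S^1)$ the standard quasi-isomorphism and $f\circ\iota$ the inclusion $k\hookrightarrow H^\ast(S^1)$. Applying $\tilde{\Phi}_\iota$ (for $\iota\colon k\hookrightarrow\Omega H_\ast(BS^1)$) to $\Gamma_f\fm\sim\Gamma_f\op{BV}$ is legitimate, and the technical Lemma \ref{welem2} --- whose proof uses that $(L_f,\Gamma_f)$ is a Quillen equivalence (Lemma \ref{welem}), hence relies on left properness of the reduced operad category --- identifies $\tilde{\Phi}_{f\circ\iota}(\op{Q})$ with $\tilde{\Phi}_\iota(\Gamma_f\op{Q})$ up to weak equivalence. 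That lemma is the missing ingredient in your argument; without it, or some substitute such as producing a resolution of $\op{BV}$ interpolating between $\fm$ and $\op{BV}$ that carries a strictly square-zero $\Delta$, the step you flag as ``the main obstacle'' does not go through. The remaining steps of your proposal (citing \cite{KMS} for $\op{H}y\op{C}om\stackrel{\sim}\to\Phi_{KMS}(\op{BV})$ and \cite{GNPR} for formality) agree with the paper and are fine.
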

\begin{proof}  Here $S_\ast$ denotes singular chains with coefficients in the field $k$.  Using formality of  the Deligne-Mumford operad $\overline{\op{M}}_{\ast+1}$, (see \cite{GNPR} Corollary 7.2.1) we know there exists a zig-zag of weak equivalences connecting $\op{H}y\op{C}om\sim S_\ast(\overline{\op{M}}_{\ast+1})$. From \cite{KMS} we know there is a weak equivalence $\op{H}y\op{C}om\stackrel{\sim}\to\Phi_{KMS}(BV)$.  So it remains to show that there is a zig-zag of weak equivalences $\tilde{\Phi}(BV) \sim \tilde{\Phi}(\fm)$.

The algebra $H^\ast(S^1)$ has a Koszul resolution given by taking the cobar construction $\Omega$ of the coalgebra $H_\ast(BS^1)$.  Let $\iota\colon k\hookrightarrow \Omega H_\ast(BS^1)$ be inclusion and let $f\colon\Omega H_\ast(BS^1)\to H^\ast(S^1)$ be the standard weak equivalence.  Then, since $f\circ \iota$ is the standard injection $k\hookrightarrow H^\ast(S^1)$ we endeavor to show $\tilde{\Phi}_{f\circ \iota}(\op{BV}) \sim \tilde{\Phi}_{f\circ \iota}(\fm)$.  Note that $\fm$ is equivalent to $\op{BV}$ in the category under $\Omega H_\ast(BS^1)$.  In particular, there is a zig-zag of weak equivalences $\fm\stackrel{\sim}\leftarrow \op{BV}_\infty\stackrel{\sim}\rightarrow \op{BV}$ under $\Omega H_\ast(BS^1)$.  Therefore $\tilde{\Phi}_\iota(\Gamma_f \fm)\sim \tilde{\Phi}_\iota(\Gamma_f \op{BV})$.  Applying Lemma $\ref{welem2}$ proves the claim.  
\end{proof}
We let $\op{H}y\op{C}om_\infty$ denote the cobar resolution via the Koszul dual operad $\op{G}rav$, i.e. $\op{H}y\op{C}om_\infty:= \Omega \op{G}rav^\ast$.  Cofibrancy of this operad gives us:
\begin{corollary}\label{HCinfcor} There exists a zig-zag of weak equivalences: $\fm_{hS^1}\stackrel{\sim}\leftarrow \op{H}y\op{C}om_\infty\stackrel{\sim}\rightarrow \op{H}y\op{C}om$.
\end{corollary}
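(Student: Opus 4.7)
The plan is to exploit cofibrancy of $\op{H}y\op{C}om_\infty$ to rectify the zig-zag from Theorem \ref{chainmodelthm} into a direct weak equivalence.  The right arrow $\op{H}y\op{C}om_\infty \stackrel{\sim}\to \op{H}y\op{C}om$ is simply the canonical counit of the bar-cobar adjunction; it is a quasi-isomorphism because $(\op{G}rav, \op{H}y\op{C}om)$ is a Koszul dual pair of (cyclic) operads, a theorem of Getzler tied to his computation of the rational cohomology of the moduli spaces of punctured genus zero surfaces.  Simultaneously, as the cobar construction $\Omega$ applied to a connected coaugmented dg cooperad, $\op{H}y\op{C}om_\infty$ is quasi-free and therefore cofibrant in $\op{O}ps$.

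For the left arrow, I would invoke a standard model-categorical lifting argument.  Theorem \ref{chainmodelthm} supplies a zig-zag of weak equivalences of dg operads connecting $\fm_{hS^1}$ to $\op{H}y\op{C}om$ (through $S_\ast(\overline{\op{M}}_{\ast+1})$ and the formality result of \cite{GNPR}).  In the model category $\op{O}ps$, every object is fibrant, since fibrations and weak equivalences are created by the forgetful functor to symmetric sequences of dg vector spaces over a field $k$ of characteristic zero, where every object is fibrant.  Starting from the counit $\op{H}y\op{C}om_\infty \to \op{H}y\op{C}om$, I proceed inductively backward along the zig-zag: each backward weak equivalence is factored as a cofibration followed by an acyclic fibration, and the map from the cofibrant source $\op{H}y\op{C}om_\infty$ lifts through each acyclic fibration by the standard lifting property.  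The resulting direct map $\op{H}y\op{C}om_\infty \to \fm_{hS^1}$ is a weak equivalence by two-out-of-three applied to its composition with the rest of the zig-zag.

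There is no substantive obstacle here: once Koszulness of the $\op{G}rav$-$\op{H}y\op{C}om$ pair (and hence cofibrancy of $\op{H}y\op{C}om_\infty$) is cited, the corollary is a formal consequence of Theorem \ref{chainmodelthm} together with the rectification of zig-zags of weak equivalences out of a cofibrant source.  The only bookkeeping is tracking the intermediate operads in the zig-zag supplied by the proof of Theorem \ref{chainmodelthm}, but cofibrancy provides the needed lift at every stage.
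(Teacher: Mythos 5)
Your proposal is correct and matches the paper's intent: the paper derives the corollary in one line from cofibrancy of $\op{H}y\op{C}om_\infty=\Omega\op{G}rav^\ast$ together with Theorem \ref{chainmodelthm}, leaving implicit exactly the standard rectification argument (fibrancy of all objects, lifting the cofibrant source through the zig-zag, two-out-of-three) that you spell out.
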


We now consider the effect that degeneration has upon the algebraic operations on $CH^\ast(\op{O},\mu)$ and $C_\lambda^\ast(\op{O},\mu)$ and their cohomologies.

\begin{theorem}\label{algthm}  Let $\mu\colon\op{A}_\infty\to\op{O}$ be a morphism of cyclic dg operads which is cyclically degenerate (Definition $\ref{degdef2}$).  Then:
\begin{enumerate}
\item  The $\fm$-algebra structure on $CH^\ast(\op{O},\mu)$ lifts along the inclusion $\fm \to \fm_{hS^1}$ to an $\fm_{hS^1}$-algebra, and hence a $\op{H}y\op{C}om_\infty$-algebra.
\item  There is a $\op{H}y\op{C}om_\infty$-algebra structure on $C_\lambda^\ast(\op{O},\mu)$ for which the inclusion $C_\lambda^\ast(\op{O},\mu)\to CH^\ast(\op{O},\mu)$ extends to an $\infty$-morphism between these $\op{H}y\op{C}om_\infty$-algebra structures.
\item  The gravity structure on $HC^\ast(\op{O},\mu)$ vanishes.
\end{enumerate}
\end{theorem}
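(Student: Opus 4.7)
The three parts will be established in order, with parts (2) and (3) building on part (1). For part (1), the argument is essentially a direct application of Corollary \ref{KMScor}. The $\fm$-algebra structure on $CH^\ast(\op{O},\mu)$ makes $CH^\ast$ an algebra over $\fm$ viewed as an $S^1$-operad, with the BV operator acting as the mixed-complex $\Delta$ of Section \ref{cycsec}. Cyclic degeneracy of $\mu$ (Definition \ref{degdef2}) is by definition a trivialization of this $\Delta$ (Definition \ref{degdef}). Corollary \ref{KMScor} applied with $\op{Q} = \fm$ then lifts the $\fm$-algebra structure to a $\fm_{hS^1} = \Phi_{KMS}(\fm)$-algebra structure, compatibly with the inclusion $\fm \hookrightarrow W(\fm) \to \fm_{hS^1}$. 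Pulling back along the weak equivalence $\op{H}y\op{C}om_\infty \xrightarrow{\sim} \fm_{hS^1}$ from Corollary \ref{HCinfcor} yields the claimed $\op{H}y\op{C}om_\infty$-algebra structure.

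For part (2), I would use homotopy transfer along the zig-zag induced by the trivialization. First extend the $\op{H}y\op{C}om_\infty$-structure on $CH^\ast$ to $CH^\ast[[z]] = (\op{O}^\ast[[z]], d_\mu)$ by $k[[z]]$-linearity. The $k[[z]]$-linear isomorphism $F \colon (\op{O}^\ast[[z]], d_\mu + z\Delta) \xrightarrow{\cong} (\op{O}^\ast[[z]], d_\mu)$ of Lemma \ref{trivlem} transports this structure back to $(\op{O}^\ast[[z]], d_\mu + z\Delta)$. By Lemma \ref{cyclem2}, the inclusion $\iota \colon C_\lambda^\ast \hookrightarrow (\op{O}^\ast[[z]], d_\mu + z\Delta)$ is a quasi-isomorphism; cofibrancy of $\op{H}y\op{C}om_\infty = \Omega\op{G}rav^\ast$ then permits homotopy transfer, producing a $\op{H}y\op{C}om_\infty$-algebra structure on $C_\lambda^\ast$ together with an $\infty$-quasi-isomorphism to $(\op{O}^\ast[[z]], d_\mu + z\Delta)$ whose first-order term is $\iota$. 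Post-composing with the strict maps $F$ and $\mathrm{ev}_{z=0} \colon (\op{O}^\ast[[z]], d_\mu) \to CH^\ast$ yields an $\infty$-morphism $C_\lambda^\ast \leadsto CH^\ast$. Since $\beta_0 = \mathrm{id}$, its first-order component sends $a \mapsto \mathrm{ev}_{z=0}(F(\iota(a))) = \beta_0(a) = a$, extending the original inclusion.

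For part (3), the gravity brackets on $HC^\ast$ are to be interpreted as obstructions to degeneration of the Hochschild-to-cyclic spectral sequence. Each gravity generator in $\mathsf{M}_\circlearrowright(n)$ admits (via Getzler-type formulas) an expression in $\fm_{hS^1}$ as a $\Delta$-twisted product, which becomes null-homotopic once $\Delta = \partial(\beta_1)$; equivalently, the composite $\mathsf{M}_\circlearrowright \to \fm \to \fm_{hS^1}$ sends each gravity generator to a boundary, as $H_\ast(\fm_{hS^1}) = \op{H}y\op{C}om$ contains no gravity operations. Computing the action of these classes on $HC^\ast$ through the quasi-isomorphism to $(\op{O}^\ast[[z]], d_\mu + z\Delta)$ then gives zero. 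The main obstacle lies in part (2): ensuring that the transferred $\infty$-morphism really extends the strict inclusion requires an explicit section and homotopy for Lemma \ref{cyclem2}, compatible with both the cyclic structure and the trivialization, and relies crucially on $\beta_0 = \mathrm{id}$. A secondary subtlety in part (3) is that the individual $\beta_i$ need not preserve cyclic invariance, so vanishing must be established through the operadic/spectral-sequence formulation rather than by exhibiting a naive chain-level null-homotopy inside $C_\lambda^\ast$ itself.
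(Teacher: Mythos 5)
Parts (1) and (2) of your proposal follow essentially the same route as the paper: part (1) is exactly Corollary $\ref{KMScor}$ plus Corollary $\ref{HCinfcor}$, and your part (2) --- extend $z$-linearly, transport through the isomorphism of Lemma $\ref{trivlem}$, transfer across the quasi-isomorphism of Lemma $\ref{cyclem2}$ using cofibrancy of $\op{H}y\op{C}om_\infty$, and post-compose with the projection to the $z^0$-coefficient --- is precisely the commutative triangle the paper uses, including the observation that $\beta_0=id$ makes the linear part of the composite equal to the inclusion.

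Part (3), however, has a genuine gap. Your argument shows (at best) that the gravity generators become null-homotopic after the lift to $\fm_{hS^1}$, i.e.\ that the induced gravity operations vanish on $HH^\ast(\op{O},\mu)$. But the gravity structure in question lives on $HC^\ast(\op{O},\mu)$, induced by the action of $\mathsf{M}_\circlearrowright$ on $C_\lambda^\ast(\op{O},\mu)$, and the comparison map $I\colon HC^\ast\to HH^\ast$ is a map of gravity algebras that is \emph{not} injective (under degeneration the long exact sequence breaks into short exact sequences $0\to HC^{m-1}\stackrel{S}\to HC^{m+1}\stackrel{I}\to HH^{m+1}\to 0$, so $I$ is surjective with large kernel). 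Hence $I(g_n(\dots))=0$ does not give $g_n(\dots)=0$ in $HC^\ast$. You flag the relevant subtlety (the $\beta_i$ need not preserve cyclic invariance, so there is no naive null-homotopy inside $C_\lambda^\ast$), but the proposed remedy --- ``computing the action through the quasi-isomorphism to $(\op{O}^\ast[[z]],d_\mu+z\Delta)$'' --- is not an argument: that quasi-isomorphism is not compatible with the $\mathsf{M}_\circlearrowright$-action in any way you have established. The paper closes exactly this gap with the Chas--Sullivan formula $g_n([a_1]\cdc[a_n])=B(I([a_1])\bullet\cdots\bullet I([a_n]))$, which expresses the gravity bracket on $HC^\ast$ as factoring through the connecting map $B\colon HH^{m+1}\to HC^m$ of the cyclic cohomology long exact sequence; cyclic degeneracy forces $B=0$ on cohomology, and the vanishing follows. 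Some input of this kind, locating the bracket on the cyclic side in terms of $B$, is needed to complete your part (3).
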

\begin{proof} The first statement follows from Corollary $\ref{KMScor}$ and Corollary $\ref{HCinfcor}$.  

For the second statement, we first use the obvious inclusion and projection maps $in\colon(\op{O}^\ast,d_\mu)\leftrightarrows (\op{O}^\ast[[z]], d_\mu)\colon \pi$, with $\pi\circ in=id$, to furnish a morphism of operads $End_{(\op{O}^\ast,d_\mu)}\to End_{(\op{O}^\ast[[z]], d_\mu)}$.  Observe that $\pi$ is a morphism of $\fm_{hS^1}$-algebras, and hence of $\op{H}y{C}om_\infty$-algebras, with respect to the induced $z$-linear extension.   

Under the degeneration hypothesis we have the commutative diagram:
\begin{equation*}
\xymatrix{ C_\lambda^\ast(\op{O}, \mu)  \ar[d]^\sim \ar[r]^{in} & (\op{O}^\ast,d_\mu) \\ (\op{O}^\ast[[z]], d_\mu+\Delta z) \cong (\op{O}^\ast[[z]], d_\mu) \ar[ur]^\pi & }
\end{equation*}
where the isomorphism is given by applying Lemma $\ref{trivlem}$ to the mixed complex $(\op{O}^\ast,d_\mu,\Delta)$.  Since $\pi$ is a morphism of $\op{H}y\op{C}om_\infty$-algebras we can, via a standard transfer argument (see \cite{BMo} Thm 3.5 or \cite{LV} Ch.10) endow $C_\lambda^\ast(\op{O}, \mu)$ with the structure of a $\op{H}y\op{C}om_\infty$-algebra such that the weak equivalence $\downarrow$ extends to an $\infty$-quasi-isomorphism of such.  Composition in the diagram proves statement 2.
 
For statement 3, one sees from \cite{Ward2} that the gravity bracket $g_n$ on $HC^\ast(\op{O},\mu)$, induced via the chain level action of $\mathsf{M}_\circlearrowright $, satisfies the Chas-Sullivan formula (after \cite{CS} p.21):  

\begin{equation} g_n([a_1]\cdc [a_n]) = B(I([a_1])\bullet...\bullet I([a_n])) 
\end{equation}     

where

\begin{equation*}
\dots\to HC^{m-1}(\op{O},\mu) \stackrel{S}\to HC^{m+1}(\op{O},\mu) \stackrel{I}\to HH^{m+1}(\op{O},\mu) \stackrel{B}\to HC^m(\op{O},\mu)\to\dots
\end{equation*}     
is the cyclic cohomology long exact sequence and $\bullet$ is the commutative product.  Since $\mu$ is assumed to be cyclically degenerate, we know that on cohomology $B=0$, from which the claim follows.
\end{proof}

From statement 3 of the theorem we derive the following consequence, which is a piece with the deformation theoretic interpretation of hyper-commutativity (via the Quantum cup product).

\begin{corollary}  The gravity brackets in $End_{HC^\ast(\op{O},\mu)}$ are obstructions to $\mu$ being cyclically degenerate.
\end{corollary}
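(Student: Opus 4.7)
The plan is to derive this corollary as a direct logical reformulation of statement (3) of Theorem \ref{algthm}. The word ``obstruction'' here carries its standard meaning: we have a hypothesis (cyclic degeneracy of $\mu$) which forces a certain structure (the gravity brackets on $HC^\ast(\op{O},\mu)$) to vanish; hence any nonzero instance of that structure prevents the hypothesis from holding. So the content to verify is the contrapositive
\[
g_n \not\equiv 0 \text{ on } HC^\ast(\op{O},\mu) \;\Longrightarrow\; \mu \text{ is not cyclically degenerate.}
\]

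First, I would explicitly invoke Theorem \ref{algthm}(3): if $\mu$ is cyclically degenerate, then every gravity bracket $g_n$ on $HC^\ast(\op{O},\mu)$ vanishes. This is precisely the statement needed, read in the reverse direction.

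Second, for the sake of making the obstruction-theoretic picture concrete, I would briefly unpack the mechanism via the Chas--Sullivan formula recalled in the proof of Theorem \ref{algthm}:
\[
g_n([a_1],\ldots,[a_n]) \;=\; B\bigl(I([a_1])\bullet\cdots\bullet I([a_n])\bigr).
\]
Here $B$ is the connecting morphism in the cyclic cohomology long exact sequence, which on the $E_1$-page of the Hochschild-to-cyclic spectral sequence coincides with the differential $d^1=[\Delta]$. The existence of a trivialization $\{\beta_i\}$ (i.e., cyclic degeneracy in the sense of Definition \ref{degdef2}) in particular requires $\partial(\beta_1) = \Delta$, which forces $[\Delta] = 0$ on Hochschild cohomology and hence $B = 0$ on $HH^\ast(\op{O},\mu)$. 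Thus every $g_n$ vanishes identically, as claimed.

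Third, I would close by stating the contrapositive: if some $g_n([a_1],\ldots,[a_n])\neq 0$ in $HC^\ast(\op{O},\mu)$, then $B\neq 0$ on the image of $I$, so already $\beta_1$ cannot be constructed and $\mu$ fails to be cyclically degenerate. There is no real obstacle in this argument --- Theorem \ref{algthm}(3) does the mathematical work --- so the only care required is in presenting the result in the obstruction-theoretic language consistent with the spectral-sequence perspective emphasized in the introduction.
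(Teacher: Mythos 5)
Your proposal is correct and matches the paper's own (implicit) argument: the corollary is stated as a direct consequence of Theorem \ref{algthm}(3), read contrapositively, and the mechanism you unpack --- the Chas--Sullivan formula $g_n([a_1],\dots,[a_n])=B(I([a_1])\bullet\cdots\bullet I([a_n]))$ together with $B=0$ under cyclic degeneracy --- is exactly how the paper proves that statement.
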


\begin{remark}  If we consider the usual sequence of operads $\op{G}rav\to\op{BV}\to\op{H}y\op{C}om$ and note that composition in this sequence is the zero map, then we may choose to interpret statement 3 of Theorem $\ref{algthm}$ as a lifting statement.  Indeed it says that under degeneration, the gravity structure on $HC^{\ast}(\op{O},\mu)$ lifts via this sequence to a hyper-commutative algebra.
	
	It would be interesting to prove a chain level refinement of this lifting statement asserting that under degeneration, a $\op{G}rav_\infty$ structure on $C_\lambda^\ast(\op{O}, \mu)$  
	 induced by  $\mathsf{M}_\circlearrowright$ lifts (up to an $\infty$-quasi-isomorphism) to the $\op{H}y\op{C}om_\infty$-algebra structure given in statement 2 of the theorem.  A first step in this direction is to show the existence of a weak equivalence of dg operads $\op{G}rav_\infty\stackrel{\sim}\to\mathsf{M}_\circlearrowright$, a result which will appear as part of upcoming joint work with R. Campos.
	
\end{remark}

\section{Topology and combinatorics of the chain model $\fm_{hS^1}$.}\label{combsec}

The chain models $\mathsf{M}_\circlearrowright, \mathsf{M}$, and $\fm$ may be described using the combinatorics of trees.  We now briefly describe such an interpretation of the chain model $\fm_{hS^1}$.  To begin, we recall that $\fm$ is described via rooted, planar, black and white trees with spines, see Figure $\ref{fig:rect}$.  In particular, the figure depicts the set of cells which ensure the BV equation holds up to homotopy.  Here we have suppressed associahedra labels of black vertices for simplicity, see \cite{Ward1} for details.

\begin{figure}
	\centering
		\includegraphics[scale=1.9]{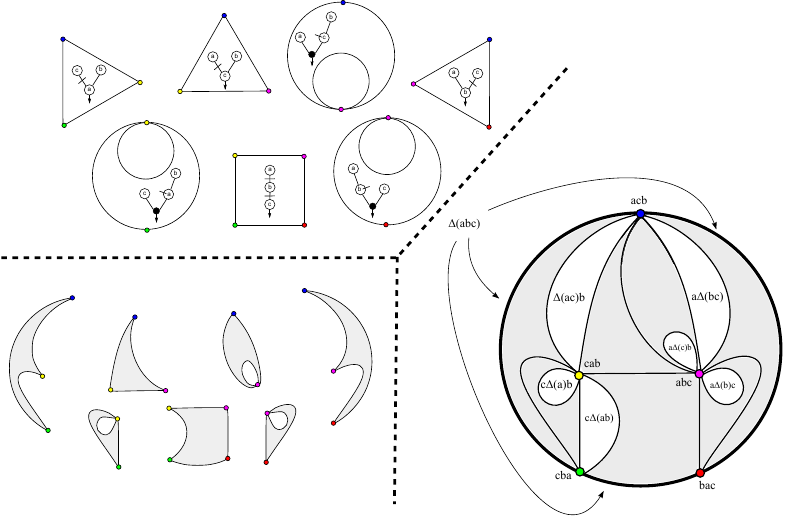}
	\caption{The BV equation on a sphere via $\fm$.  Here we see one hemisphere and 6 holes.  The 7th hole is the missing hemisphere.  If an $\fm$-algebra lifts to an $\fm_{hS^1}$-algebra we may fill in the 7 holes to form a ternary cycle corresponding to the fundamental class of $\overline{\op{M}}_4$.}
	\label{fig:rect}
\end{figure}

Operadic composition at the $\beta_i$ is free, so a general operation in $\fm_{hS^1}$ may be depicted as a (non-planar) rooted tree with tails, each of whose vertices are labeled by an arity appropriate 2-colored planar rooted trees with spines, or by $\beta_i$.  Since the arity of each of the $\beta_i$ is $1$, this is the same thing as a (composition of) planar tree(s) with three types of vertices; the black and white from $\fm$ and the $\beta_i$, see Figure $\ref{fig:trees}$.  To distinguish these new vertices they will be drawn as gray rectangles.

\begin{figure}
	\centering
		\includegraphics[scale=.6]{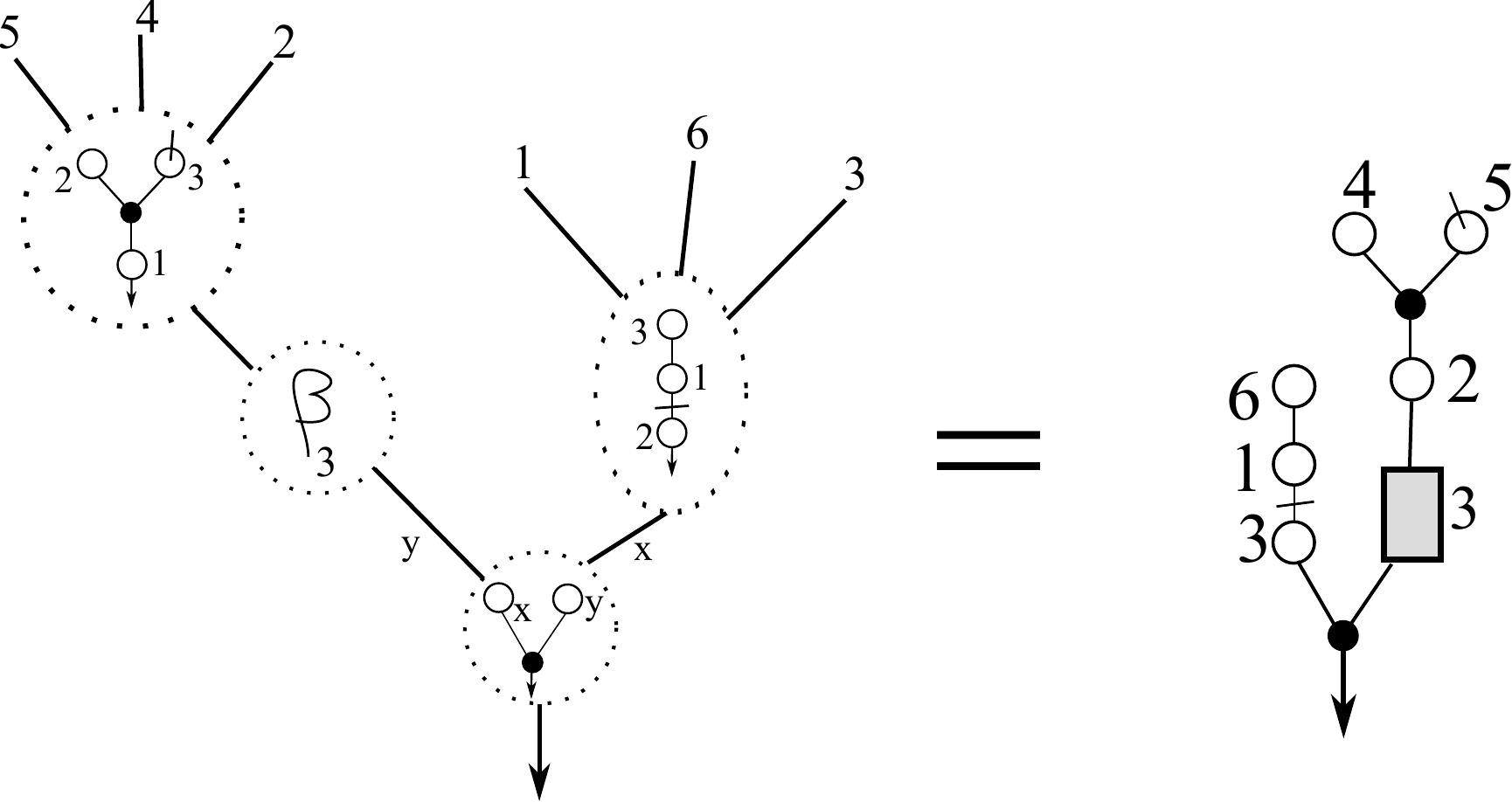}
		\caption{A generator of $\fm_{hS^1}$ as a 3-colored tree on the right hand side.  In general such a correspondence uses the operad composition in $\fm$ and can produce a sum of $3$-colored trees if composing at white vertices of sub-maximum height.}
	\label{fig:trees}
\end{figure}

In particular, we identify $\fm_{hS^1}(n)$ with the span of planar black, white, and gray rooted trees having $n$ white vertices, along with the appropriate vertex labels: white vertices carry labels by $\{1\cdc n\}$ and spines, black vertices carry associahedra cells of appropriate arity as labels, and gray vertices are each labeled by a natural number.  These trees are subject to combinatorial restrictions including all black vertices have at least 2 incoming edges and all gray vertices have exactly 1 incoming and 1 outgoing (half) edge (allowed to be the root).
Notice these trees to not carry tails (unmatched half-edges) except the root.  Also notice that the gray vertices do not contribute to the arity.

The differential can be described as a sum over the vertices and works as in $\fm$ away from the gray vertices.  At gray vertices the label is reduced by $1$ and we take $\Delta$ of the input. For example:
\begin{center}
\includegraphics[scale=.5]{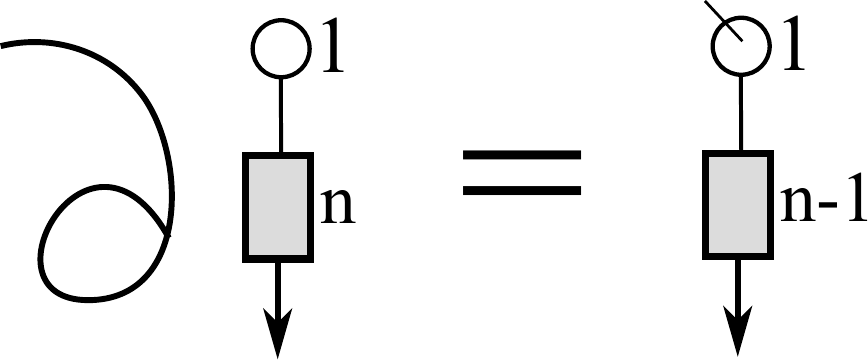}
\end{center}

Consequently, $\fm_{hS^1}(3)$ includes the $2$-cells needed to fill in the holes in the sphere corresponding to the BV equation in Figure $\ref{fig:rect}$.  For example we fill in the hole labeled by $\Delta(ac)b$ with:
\begin{center}
		\includegraphics[scale=.7]{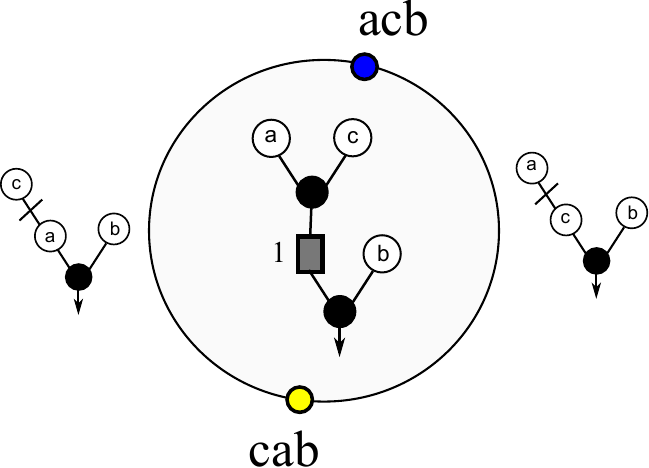}
\end{center}

The sum of these fourteen cells represents the fundamental class of $\overline{\op{M}}_4$ in $\fm_{hS^1}(3)$.  We could similarly describe the fundamental class of $\overline{\op{M}}_5$ by adding the cells prescribed in \cite{LS} to the failure of the appropriate 4-ary relation.

To conclude, let us recall that each chain complex $\fm(n)$ may be viewed as the cell complex of a CW complex \cite{Ward1}.  We may also consider such a description for $\fm_{hS^1}$.  Cellularly $\beta_n$ corresponds to an open $2n$ disk bounded by the $2n-1$ sphere $\overline{\beta_{n-1}\Delta}$ and we may use composition in Cacti to form a CW complex whose cellular chain complex is:
\begin{equation*}
\dots\to k_{\beta_1\Delta}\stackrel{0}\to k_{\beta_1}\stackrel{1}\to k_{\Delta}\stackrel{0}\to k_{id}
\end{equation*}
For example, if we consider $Int(\beta_1\Delta)=Int(D_2\times I)$, then the boundary of the disk should be attached to a composition in Cacti; recall that this composition adds the angles \cite{Vor}.  We thus attach the cell $D_2\times I$ to the closed pointed 2-disk via $(p,0)\mapsto p$ and $(p,1)\mapsto p$ for $p\in Int(D_2)$ and $(e^{2\pi i\theta_1},\theta_2)\mapsto e^{2\pi i(\theta_1+\theta_2)}$ for $e^{2\pi i\theta_1}\in \partial D_2$.  This 3 skeleton is identified with $S^3=\{(r_1e^{i\theta_1},r_2e^{i\theta_2}) : r_1^2+r^2_2=1\}$ by
the map $(r_1e^{i\theta_1},r_2e^{i\theta_2})\mapsto (r_1e^{2\pi i(\theta_1-\theta_2)}, \theta_2+r_1\theta_1)$.  In particular, the cell $\Delta$ is the fiber direction; i.e. concatenating the above map with projection to the first factor and then identifying the unit disk mod boundary with $\mathbb{C}\cup \infty$\ via $p\mapsto p/\sqrt{1-|p|^2}$ we find $(r_1e^{i\theta_1},r_2e^{i\theta_2})\mapsto r_1/r_2e^{2\pi i (\theta_1-\theta_2)}$, the Hopf map.

It is not expected that these CW complexes form a topological operad on the nose, but rather a weak topological operad, due to normalization issues, as in \cite{KCacti}.

\bibliography{cycbib}
\bibliographystyle{alpha}

\end{document}